\newcommand{\scal}[2]{\left\langle {#1} \middle| {#2} \right\rangle}
\newtheorem{thm}{Theorem}
\newtheorem{defn}{Definition}
\newtheorem{lem}{Lemma}
\newtheorem{cor}{Corollary}
\newtheorem{remm}{Remark}
\newtheorem{exmp}{Example}
\newtheorem{pb}{Problem}
\newenvironment{proof}{{\it Proof :~}}{\hfill$\square$\\}
\newenvironment{rem}{\begin{remm}\rm }{\hfill \hspace*{1pt} \hfill $\lrcorner$\end{remm}}
\pgfplotsset{compat=1.15}
\begin{document}
\title{\LARGE Instability conditions for reaction-diffusion-ODE systems}
\author{Mathieu Bajodek, Hugo Lhachemi, Giorgio Valmorbida
\thanks{The authors are with Universit\'e Paris Saclay, CNRS, CentraleSupelec, INRIA, Laboratoire des signaux et syst\`emes, 91190 Gif Sur Yvette, France (e-mail: mathieu.bajodek@centralesupelec.fr).}}

\maketitle

\begin{abstract}
This paper analyzes the stability of a reaction-diffusion equation coupled with a finite-dimensional controller through Dirichlet boundary input and Neumann boundary output. Going against the flow, we intend to propose numerical certificates of instability for such interconnections. From one side, using spectral methods, an analytical condition based on root locus analysis can determine the instability regions in the parameters space and can sometimes be tested. On the other side, using Lyapunov direct and converse approaches, two sufficient conditions of instability are established in terms of linear matrix inequalities.
The novelties lie both in the type of system studied and in the methods used. The numerical results demonstrate the performance of the different criteria set up in this paper and allow us to conjecture that these conditions seem to be necessary and sufficient.
\end{abstract}

\begin{IEEEkeywords}
Distributed parameter systems, Stability of linear systems, Reaction-diffusion, Semidefinite programming.
\end{IEEEkeywords}

\section{Introduction}
\label{sec:introduction}
Physical phenomena of heat, particles, or electric charge flows are modeled by reaction-diffusion equations~\cite{Plawsky2009}. In these systems, the inputs and outputs often appear at the boundaries. Therefore, to stabilize or regulate the state of a reaction-diffusion system, boundary measurements and actuation are imposed. Moreover, for practical reasons, stabilizing control laws are often required to be finite-dimensional and linear. The interconnection between the system and the controller thus leads to a non-standard linear infinite-dimensional system, which makes its stability analysis a tough task~\cite{Suzuki2017,Mironchenko2019}. This paper studies this class of interconnections aiming at developing numerical methods for the stability analysis and proposes conditions allowing us to conclude on the stability in the space of the system's parameters, namely the reaction and diffusion coefficients as well as the finite-dimensional terms.

Sufficient conditions for the stability of interconnections between a partial differential equation (PDE) and an ordinary differential equation (ODE) can be obtained by structuring Lyapunov functional as the sum of two quadratic terms, one associated to the infinite-dimensional part~\cite{Peet2006} and another one related to the ODE dynamics~\cite{Prieur2012}. Such an approach, imposing negative derivatives along the trajectories of the coupled system, leads to pessimistic stability estimates. To these limited Lyapunov functional structures, crossed terms between PDE and ODE variables can be introduced with particular parametrizations of the Lyapunov functional~\cite{Peet2019} using Legendre~\cite{Baudouin2019}, spectral~\cite{Prieur2019} or Fourier~\cite{Katz2020} terms. In~\cite{Karafyllis2019}, an input-to-state analysis based on the small-gain theorem is also presented.
These techniques reduce conservatism and simplify the design approach, although they remain only sufficient conditions for stability.

On a different vein, backstepping methods aim to design state feedback controllers with boundary actuation by fixing an inherently stable target system~\cite{Krstic2009,Tang2011}. Nevertheless, the panel of target systems is limited and the technique, in general, requires an additional step of discretization of the control~\cite{Auriol2019} and may lack the robustness addressed by dynamical finite-dimensional controllers~\cite{Karafyllis2019}. To get free of instability phenomena or loss of robustness, an anticipation stage to the design process is needed and studied in this paper.

In this paper, we approach the stability analysis of reaction-diffusion PDE and ODE interconnections from a different angle by proposing sufficient conditions of instability for the system. Therefore, we establish conditions to identify unstable systems thus allowing us to determine regions of parameters yielding unstable trajectories~\cite{Casten1978,Suzuki2017}. Combined with sufficient stability conditions, our result enables us to obtain inner and outer approximations of the stability regions in the state of parameters. The study of instability conditions has been considered for transport PDE coupled with an ODE (time-delay systems)~\cite{Mondie2017,Mondie2022,Sipahi2011,Oliveira1994}, which inspired us to include the case of reaction-diffusion PDE and ODE interconnected systems. Our study will look in particular at spectral or quasi-spectral projection methods.

Section~II presents our linear reaction-diffusion-ODE system and its characteristics. 
From one side, Section~\ref{sec:spectral} deals with spectral analysis. 
In the Laplace domain, using a Riesz decomposition of our operator, the stability relies on the location of the roots of the characteristic equation~\cite{Curtain2020} (as for time-delay systems~\cite{Sipahi2011}). On the other side, Section~\ref{sec:lyapunov} uses Lyapunov analysis. In the time domain, the existence of a positive Lyapunov operator is crucial~\cite{Datko1970} (as for time-delay systems~\cite{Kharitonov2006,Mondie2022}). Direct and converse Lyapunov instability conditions are then proposed with the help of projections on sub-spaces of the infinite-dimensional state space. Section~V is finally devoted to two examples. Particular attention is paid to the simple case of scalar systems for illustrative purposes.

\textit{Notation:}  In this paper, the set of natural, real, complex numbers, real matrices of size $n\times m$ and of symmetric positive definite matrices of size $n$ are denoted by $\mathbb{N}$, $\mathbb{R}$, $\mathbb{C}$, $\mathbb{R}^{n \times m}$ and $\mathbb{S}^{n}_+$, respectively. For any $s\in\mathbb{C}$, $\mathrm{Re}(s)$ and $\mathrm{Im}(s)$ represent its real and imaginary parts and $\mathbf{i}=\sqrt{-1}$. The notation $\mathbf{e}_j^n$ stands for the $j$-th vector of the canonical basis of $\mathbb{R}^n$. For any square matrix $M$, $M\succ 0$ means that $M$ belongs to $\mathbb{S}_n^+$ and $\mathrm{He}(M)=M+M^\top$, where $M^\top$ is the transpose of matrix $M$. Denote also its determinant $\mathrm{det}(M)$, adjugate $\mathrm{adj}(M)$ and kernel $\mathrm{ker}(M)$. Moreover, $\mathcal{A}^\ast$ will be used for the adjoint of operator $\mathcal{A}$. We also consider functions $\mathrm{cosh}(\sigma)=\frac{e^{\sigma}+e^{-\sigma}}{2}$, $\mathrm{sinh}(\sigma)=\frac{e^{\sigma}-e^{-\sigma}}{2}$ and $\mathrm{sinhc}(\sigma)=\frac{e^{\sigma}-e^{-\sigma}}{2\sigma}$. We finally set $\mathcal{H}:=\mathbb{R}^n\times L^2(a,b)$ and $\mathcal{H}^2:=\mathbb{R}^n\times H^2(a,b)$, where $L^2$ is the space of square integrable functions and $H^2$ the second order Sobolev space. In $\mathcal{H}$, define the scalar product by $\scal{\begin{bsmallmatrix}x_1\\z_1\end{bsmallmatrix}}{\begin{bsmallmatrix}x_2\\z_2\end{bsmallmatrix}} \! =\!x_1^\top x_2+\int_{a}^b\! z_1^\top(\theta)z_2(\theta)\mathrm{d}\theta$ and the associated norm $\left\lVert\begin{bsmallmatrix}x\\z\end{bsmallmatrix}\right\rVert^2\! =\!|x|^2+\int_{a}^b |z(\theta)|^2\mathrm{d}\theta$, where $|\cdot|$ is the Euclidean norm. Lastly, introduce the Kronecker function $\delta_{\theta_p}(z):=\left\{\begin{array}{ccc}0&\text{if}&\theta\neq \theta_p\\z(\theta_p)&\text{if}&\theta=\theta_p\end{array}\right.$ and the notation $\mathrm{span}(\mathcal{S})$ for linear combinations of the vectors in the set $\mathcal{S}$.

\section{Problem statement}

\subsection{Reaction-diffusion and ODE interconnected system}

Consider one reaction-diffusion equation interconnected through the boundaries to a set of $n_x$ ODEs
\begin{equation}\label{eq:system}
\left\{
\begin{aligned}
    \dot{x}(t) &= A x(t) + B \partial_{\theta}z(t,\theta_o),\\
    \partial_{t}z(t,\theta) &= (\nu\partial_{\theta\theta}+ \lambda) z(t,\theta),\quad \forall \theta\in(0,\theta_i),\\
    \begin{bsmallmatrix} z(t,0)\\z(t,\theta_i)\end{bsmallmatrix} &=  \begin{bsmallmatrix}Cx(t)\\0\end{bsmallmatrix},
\end{aligned}
\right.
\end{equation}
for all $t\geq 0$, where matrices $A\in\mathbb{R}^{n_x\times n_x}$, $B\in\mathbb{R}^{n_x\times 1}$, $C\in\mathbb{R}^{1\times n_x}$ and where scalars $\lambda\in\mathbb{R}$, $\nu>0$, $\theta_i>0$ and $\theta_o\in[0,\theta_i]$.

This interconnection is representative of a control loop of a reaction-diffusion system, where the PDE part is the plant and the ODE part corresponds to the dynamics of the controller. We associate the following linear operator to system~\eqref{eq:system}
\begin{equation}\label{eq:A}
    \mathcal{A} = \begin{bmatrix} A & B\delta_{\theta_o}\partial_\theta\\ 0 & \nu\partial_{\theta\theta}+\lambda\end{bmatrix},
\end{equation}
on the domain $\mathcal{D}$ given by
\begin{equation}\label{eq:domains}
    \mathcal{D} := \left\{ \begin{bsmallmatrix}x\\z\end{bsmallmatrix}\in\mathbb{R}^{n_x}\!\times\! H^2(0,\theta_i)\,|\, \begin{bsmallmatrix}C&-\delta_0\\0&-\delta_{\theta_i}\end{bsmallmatrix}\begin{bsmallmatrix}x\\z\end{bsmallmatrix}=0\right\}.
\end{equation}

In the Laplace domain, an irrational transfer function can describe the reaction-diffusion part. Indeed, considering zero initial conditions, we have
\begin{equation}
\left\{
    \begin{aligned}
    sZ(s,\theta) &= (\nu\partial_{\theta\theta}+\lambda)Z(s,\theta),\quad\forall\theta\in(0,\theta_i),\\
    \begin{bsmallmatrix}Z(s,0)\\Z(s,\theta_i)\end{bsmallmatrix} &= \begin{bsmallmatrix}CX(s)\\0\end{bsmallmatrix}.
    \end{aligned}
\right.
\end{equation}
Solving this reaction-diffusion equation with respect to the Laplace variable $s$, the distributed transfer function from $CX(s)$ to $Z(s,\theta)$ is given by
\begin{equation}\label{eq:G}
\begin{aligned}
    G(s,\theta) \!&=\! \begin{bmatrix}e^{\sqrt{\frac{s-\lambda}{\nu}}\theta}\\e^{-\sqrt{\frac{s-\lambda}{\nu}}\theta} \end{bmatrix}^{\!\top} \! \begin{bmatrix} 1 & 1 \\ e^{\sqrt{\frac{s-\lambda}{\nu}}\theta_i} & e^{-\sqrt{\frac{s-\lambda}{\nu}}\theta_i} \end{bmatrix}^{-1}\begin{bmatrix}1\\0\end{bmatrix},\\
    \!&=\! \frac{\sinh\!\left(\!\sqrt{\frac{s-\lambda}{\nu}}(\theta_i-\theta)\!\right)}{\mathrm{sinh}\!\left(\!\sqrt{\frac{s-\lambda}{\nu}}\theta_i\!\right)\!}, \quad \forall \theta\in[0,\theta_i],
\end{aligned}
\end{equation}
for all $s\in\mathbb{C}\backslash \left\{-\nu(\frac{k\pi}{\theta_i})^2+\lambda\right\}_{k\in\mathbb{N}}$ leading to the following transfer function from the input $CX(s)$ to the output $\partial_\theta Z(s,\theta_o)$ \begin{equation}\label{eq:H}
    H(s) = \partial_\theta G(s,\theta_o)
    = -\frac{\cosh\!\left(\!\sqrt{\frac{s-\lambda}{\nu}}(\theta_i-\theta_o)\!\right)}{\theta_i\mathrm{sinhc}\!\left(\!\sqrt{\frac{s-\lambda}{\nu}}\theta_i\!\right)\!}.
\end{equation}

Let $\Delta(s) := \mathrm{det}(sI_{n_x} - A - B H(s) C)$. The point spectrum of operator $\mathcal{A}$ are solutions to
\begin{equation}\label{eq:charac}
    \Delta(s) = 0.
\end{equation}

\begin{rem}
Compared to time-delay systems~\cite{Sipahi2011}, the transfer function of the transport equation $e^{-hs}$ (or delay $h>0$) is the irrational transfer function $H(s)$ in~\eqref{eq:H}, which is holomorphic on the set $\mathbb{C}\backslash \left\{-\nu(\frac{k\pi}{\theta_i})^2+\lambda\right\}_{k\in\mathbb{N}}$, namely a meromorphic function. 
\end{rem}

\subsection{Riesz decomposition}

We focus here on the modal decomposition of the operator $\mathcal{A}$ to deduce the existence and analytic properties of the semigroup generated by~$\mathcal{A}$ on the infinite-dimensional state space $\mathcal{H}:=\mathbb{R}^{n_x}\times L^2(0,\theta_i)$.\\

\begin{lem}
The point spectrum of operator $\mathcal{A}$ in~\eqref{eq:A}, namely the roots of $\Delta(s)$ as in~\eqref{eq:charac}, are isolated and of finite algebraic multiplicity.
\end{lem}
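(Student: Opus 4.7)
The plan is to use complex-analytic properties of the characteristic function $\Delta$ from~\eqref{eq:charac} and then examine the generalized eigenspaces of $\mathcal{A}$.

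First, I would check that $\Delta$ extends as a meromorphic function on all of $\mathbb{C}$. Although $H$ in~\eqref{eq:H} is written in terms of $\sqrt{(s-\lambda)/\nu}$, both $\cosh(\alpha\sqrt{w})$ and $\mathrm{sinhc}(\alpha\sqrt{w})$ have Taylor expansions involving only even powers of $\sqrt{w}$, hence only integer powers of $w$. They are therefore entire functions of $w=(s-\lambda)/\nu$ and of $s$. Consequently $H(s)$ is meromorphic on $\mathbb{C}$, with poles exactly at the excluded set $\{\lambda - \nu(k\pi/\theta_i)^2\}_{k\geq 1}$, and so is $\Delta(s)$.

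Next, I would verify that $\Delta\not\equiv 0$ by inspecting its behaviour as $s\to+\infty$ on the real axis. Setting $\sigma=\sqrt{(s-\lambda)/\nu}$, direct computation gives $H(s) = -\sigma\cosh(\sigma(\theta_i-\theta_o))/\sinh(\sigma\theta_i) = O(\sigma) = O(\sqrt{s})$, so that $sI_{n_x} - A - BH(s)C = sI_{n_x}(1+o(1))$ and $\Delta(s)\sim s^{n_x}$. The identity theorem for meromorphic functions then shows that the zeros of $\Delta$ are isolated in $\mathbb{C}$ and of finite order.

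It remains to show that each such eigenvalue of $\mathcal{A}$ has finite algebraic multiplicity. For any root $s_0$ and any $k\geq 1$, an element $(x,z)\in\mathcal{D}$ of $\ker((s_0 I - \mathcal{A})^k)$ has its $z$-component satisfying a linear ODE of order $2k$ on $(0,\theta_i)$ whose right-hand side is determined by $x\in\mathbb{R}^{n_x}$ and finitely many pointwise values; the solution space of such an ODE is finite-dimensional. Hence $\ker((s_0 I - \mathcal{A})^k)$ is finite-dimensional, and the generalized eigenspace at $s_0$, as the limit of this increasing sequence of kernels, has finite dimension. Equivalently, the resolvent of $\mathcal{A}$ differs from the resolvent of the underlying Sturm--Liouville problem by a finite-rank correction and is thus compact, so the conclusion also follows from the standard spectral theorem for operators with compact resolvent. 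I expect the main subtlety to lie in the first step: carefully justifying that the multivaluedness of $\sqrt{(s-\lambda)/\nu}$ cancels out thanks to the even structure of $\cosh$ and $\mathrm{sinhc}$, so that $H$ is genuinely single-valued and meromorphic on all of $\mathbb{C}$ rather than on a cut plane. Once this is established, the remaining arguments are standard.
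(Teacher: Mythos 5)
The paper does not prove this lemma in-house: it simply cites \cite[Lemma~2]{Zhao2019} and \cite[Lemma~1]{Dlala2022}, whose argument is exactly your closing remark --- exhibit a $\mu$ for which $(\mu-\mathcal{A})^{-1}$ exists and is compact on $\mathcal{H}$, and invoke the Riesz theory of operators with compact resolvent, which yields isolation and finite algebraic multiplicity in one stroke. Your first part is a more explicit and self-contained route to isolation: the observation that $\cosh$ and $\mathrm{sinhc}$ are even, so that $H$ and hence $\Delta$ are single-valued meromorphic functions of $s$ rather than functions on a cut plane, together with $\Delta(s)\sim s^{n_x}$ as $s\to+\infty$ to rule out $\Delta\equiv 0$, is correct and is a useful complement to the paper's citation.

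There is, however, a genuine gap in your primary argument for finite algebraic multiplicity. Your ODE argument gives $\dim\ker\bigl((s_0 I-\mathcal{A})^k\bigr)\leq 2k+n_x$ for each fixed $k$, which is finite but grows with $k$; an increasing union of finite-dimensional subspaces whose dimensions are unbounded can perfectly well be infinite-dimensional, so the sentence ``the generalized eigenspace, as the limit of this increasing sequence of kernels, has finite dimension'' does not follow. What is missing is finite \emph{ascent}: the chain of kernels must stabilize at some finite $k$. This is precisely what the compact-resolvent argument supplies (via the Riesz decomposition associated with the compact operator $(\mu-\mathcal{A})^{-1}$), so your ``equivalently'' clause is not an optional reformulation but the load-bearing step. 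To make it rigorous you still owe two small verifications: that some $\mu$ lies in the resolvent set of $\mathcal{A}$, and that $(\mu-\mathcal{A})^{-1}$ is compact --- e.g.\ as a finite-rank (or merely compact) perturbation of the resolvent of the decoupled Dirichlet reaction--diffusion operator, which maps $L^2$ boundedly into $H^2$. With those in place your proof is complete and matches the cited references.
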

\begin{proof}
The proof is given in~\cite[Lemma~2]{Zhao2019} or~\cite[Lemma~1]{Dlala2022}. It relies on the existence of a sufficiently large scalar $\mu$ such that $(\mu - \mathcal{A})^{-1}$ exists and is compact in $\mathcal{H}$.
\end{proof}

The point spectrum of $\mathcal{A}$ will be denoted by $\{s_k\}_{k\in\mathbb{N}}$ in the sequel. 

\begin{rem}
As a consequence of Lemma~1, there is a finite number of roots $\{s_k\}_{k\in\mathbb{N}}$ contained in any compact subset of $\mathbb{C}$.
\end{rem}

\begin{lem}\label{lem:riesz}
There is a set of generalized characteristic functions\footnote{A generalized characteristic function $\mathcal{F}_k$ associated to the characteristic root $s_k$ is non null and satisfy $(s_k-\mathcal{A})^{\delta}\mathcal{F}_k=0$, for a positive integer $\delta$.} $\{\mathcal{F}_k\}_{k\in\mathbb{N}}$ of $\mathcal{A}$ in~\eqref{eq:A}, which forms a Riesz basis for $\mathcal{H}$.
\end{lem}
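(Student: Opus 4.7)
The plan is to exhibit the generalized characteristic functions of $\mathcal{A}$ explicitly, compare them asymptotically to a reference orthogonal basis of $\mathcal{H}$, and conclude via a Bari-type theorem.

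First, I will introduce the decoupled reference operator $\mathcal{A}_0 := \mathrm{diag}(A,\,\nu\partial_{\theta\theta}+\lambda)$ with homogeneous Dirichlet boundary conditions on the PDE block. Its (generalized) eigenvectors $\{\mathbf{e}_j^{n_x}\}_{1\le j\le n_x}$ on the ODE block, together with $\{\phi_k(\theta) = \sqrt{2/\theta_i}\sin(k\pi\theta/\theta_i)\}_{k\in\mathbb{N}}$ on the PDE block, form an orthogonal (hence Riesz) basis of $\mathcal{H}$ with PDE-side eigenvalues $s_k^0 = -\nu(k\pi/\theta_i)^2 + \lambda$. This will serve as the comparison basis.

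Second, for each simple root $s_k$ of $\Delta$ satisfying~\eqref{eq:charac}, I will build the characteristic function $\mathcal{F}_k = \begin{bsmallmatrix} x_k \\ G(s_k,\cdot)\,Cx_k \end{bsmallmatrix}$, with $x_k \in \ker(s_k I_{n_x} - A - BH(s_k)C)$; generalized characteristic functions associated with a Jordan chain are obtained by recursively solving $(s_k - \mathcal{A})^{\delta}\mathcal{F}=0$ in the same explicit form, using~\eqref{eq:G}--\eqref{eq:H}. Exploiting the hyperbolic asymptotics in~\eqref{eq:H}, a Rouché-type argument applied to $\Delta(s)$ on circles of radius $O(1/k)$ centered at $s_k^0$ should localize each $s_k$, for $|k|$ large, inside a shrinking disk around $s_k^0$ and guarantee simplicity. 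Once normalized in $\mathcal{H}$, the PDE component of $\mathcal{F}_k$ will converge in $L^2(0,\theta_i)$ to $\phi_k$, while the ODE component vanishes, since the singularity of $H$ at $s_k^0$ forces $|x_k|$ to be small relative to the $L^2$-norm of $G(s_k,\cdot)Cx_k$.

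Third, I will invoke Bari's theorem: the normalized family $\{\mathcal{F}_k/\|\mathcal{F}_k\|\}_{k\in\mathbb{N}}$ is quadratically close to the above reference Riesz basis, so it suffices to establish completeness in $\mathcal{H}$. Completeness will follow from Lemma~1 together with the meromorphic structure of the resolvent $(s-\mathcal{A})^{-1}$, via a contour-integral expansion along expanding vertical strips in the spirit of~\cite{Curtain2020}. Quadratic closeness and completeness together yield the Riesz basis property.

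The main obstacles I foresee are: (i) the precise localization of the roots of $\Delta(s)$ near the poles of $H(s)$, where the Rouché argument is delicate because $H$ is meromorphic and its poles coincide exactly with the reference eigenvalues $s_k^0$, so the candidate perturbation balls must be chosen to avoid these singularities; and (ii) establishing completeness of the generalized eigenfunctions, which requires suitable decay estimates on $(s-\mathcal{A})^{-1}$ along vertical lines of $\mathbb{C}$ and will be the technically most demanding step.
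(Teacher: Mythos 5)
Your outline follows the same skeleton as the paper's proof (explicit characteristic functions, asymptotic comparison with a sine-based orthogonal basis, Rouch\'e localization, a Bari-type theorem), but two of your steps do not work as stated. First, the localization: you propose Rouch\'e on circles of radius $O(1/k)$ centred at $s_k^0=-\nu(k\pi/\theta_i)^2+\lambda$ in the $s$-plane. The correct asymptotics are obtained in the variable $\sigma=\sqrt{(s-\lambda)/\nu}\,\theta_i$, where the roots satisfy $\sigma_k=\mathbf{i}k\pi+O(1/k)$; since $\mathrm{d}s/\mathrm{d}\sigma = 2\nu\sigma/\theta_i^2$ has magnitude of order $k$, this translates into $|s_k-s_k^0|=O(1)$ only, and in general this gap tends to a nonzero constant, so your shrinking disks would exclude the actual eigenvalues. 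Moreover $\Delta$ is meromorphic with poles exactly at the centres $s_k^0$, which you correctly flag as obstacle (i); the paper removes both difficulties at once by clearing denominators and changing variables, reducing the characteristic equation to the entire-function identity $e^{2\sigma}=1+O(1/\sigma)$, to which Rouch\'e applies cleanly near $\mathbf{i}k\pi$.

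Second, and more importantly, your argument hinges on proving completeness of the generalized eigenfunctions via a contour-integral expansion of the resolvent, which you yourself identify as the technically hardest step and leave entirely open; as written the proof is therefore missing precisely where it is most demanding. The paper avoids this step altogether by invoking the modified Bari theorem of \cite[Thm~6.3]{Guo2001}: it suffices that the tail family $\{F_k\}_{k\ge n}$ be quadratically close to the corresponding tail of a known orthogonal basis of $\mathcal{H}$ --- here $\{\mathbf{e}_1^{n_x},\dots,\mathbf{e}_{n_x}^{n_x}\}$ together with $\{\sin(k\pi(\theta_i-\theta)/\theta_i)\}$ --- and the theorem then guarantees that finitely many generalized eigenfunctions associated with the remaining low-index spectrum complete the Riesz basis, with no separate completeness argument or resolvent estimate required. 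If you insist on classical Bari plus completeness, you must actually supply the decay estimates on $(s-\mathcal{A})^{-1}$ along vertical lines, which is a substantial piece of analysis absent from your plan.
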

\begin{proof}
    The proof is similar to~\cite[Theorem~1]{Zhao2019}. It consists in considering the ODE as an external perturbation of the PDE. The characteristic roots of~\eqref{eq:system} are solution of~\eqref{eq:charac} and verify
    \begin{equation}\label{eq:charac2}
        \mathrm{sinhc}\!\left(\!\sqrt{\frac{s-\lambda}{\nu}}\theta_i\!\right)\!=\! R(s)\cosh\!\left(\!\sqrt{\frac{s-\lambda}{\nu}}(\theta_i-\theta_o)\!\right),
    \end{equation}
    for some rational fraction $R(s)$, whose numerator and denominator are of degrees $n_x-1$ and $n_x$ respectively. The detailed expression of $R(s)$ can be obtained similarly to~\cite[Appendix~1]{Zhao2019}, but is not needed here. Only the property $R(s)=\underset{s\to\infty}{\mathcal{O}}(\frac{1}{s})$ will be used. Denoting $\sigma =\sqrt{\frac{s-\lambda}{\nu}}\theta_i$, we obtain
    \begin{equation}\label{eq:charac3}
        (e^{\sigma}-e^{-\sigma}) \!=\! \sigma(e^{\sigma}+e^{-\sigma})R\!\left(\!\nu(\sigma/\theta_i)^2+\lambda\!\right)\!\frac{\cosh\!\left(\!\sigma\frac{\theta_i-\theta_o}{\theta_i}\!\right)}{\cosh(\sigma)}.
    \end{equation}
    Hence, arranging the terms in~\eqref{eq:charac3} gives
    \begin{equation}\label{eq:charac4}
        e^{2\sigma} = \frac{1+\sigma R\!\left(\!\nu(\sigma/\theta_i)^2+\lambda\!\right)\!\frac{\cosh\!\left(\!\sigma\frac{\theta_i-\theta_o}{\theta_i}\!\right)}{\cosh(\sigma)}}{\displaystyle 1-\sigma R\!\left(\!\nu(\sigma/\theta_i)^2+\lambda\!\right)\!\frac{\cosh\!\left(\!\sigma\frac{\theta_i-\theta_o}{\theta_i}\!\right)}{\cosh(\sigma)}},
    \end{equation}
    and letting $\sigma\to\infty$ yields
    \begin{equation}\label{eq:charac5}
     e^{2\sigma} = 1+\underset{\sigma\to \infty}{\mathcal{O}}\left(\frac{1}{\sigma}\right).
    \end{equation}
    From Rouché's theorem~\cite[Theorem 5.3.8]{Conway1978}, we show that there exists an integer $n$ from which the roots $\{\sigma_k\}_{k\geq n}$ of~\eqref{eq:charac} are algebraically and geometrically simple and expressed as
    \begin{equation}\label{eq:assymp1}
        \sigma_k = \sqrt{\frac{s_k-\lambda}{\nu}}\theta_i = \mathbf{i}k\pi + \underset{k\to\infty}{\mathcal{O}}\left(\frac{1}{k}\right).
    \end{equation}
    Following Appendix~\ref{app1}, the corresponding normalized characteristic functions $\mathcal{S}_F:=\{F_k\}_{k\geq n}$ of $\mathcal{A}$ in $\mathcal{D}$ are expressed as
    \begin{equation}\label{eq:assymp2}
        F_k := \begin{bsmallmatrix} \!\mathrm{adj}\left((\nu(\sigma_k/\theta_i)^2+\lambda)I_{n_x}-A\right)B \frac{\mathbf{i}\sinh(\sigma_k)}{C\mathrm{adj}\left((\nu(\sigma_k/\theta_i)^2+\lambda)I_{n_x}-A\right)B} \! \\ \mathbf{i}\sinh\left(\sigma_k\frac{\theta_i-\theta}{\theta_i}\right)\end{bsmallmatrix}\!.
    \end{equation}
    Using Taylor's expansion of $\sinh$, equations~\eqref{eq:assymp1},\eqref{eq:assymp2} lead to the following $\theta$-uniform asymptotic behavior 
    \begin{equation}
        F_k = \begin{bsmallmatrix} 0\\ \mathrm{sin}\left(k\pi\frac{\theta_i-\theta}{\theta_i}\right)\end{bsmallmatrix} + \underset{k\to\infty}{\mathcal{O}}\left(\frac{1}{k}\right).
    \end{equation}
    Consider now the sequence $\mathcal{S}_E:=\{\mathbf{e}_1^{n_x},\cdots,\mathbf{e}_{n_x}^{n_x},E_k\}_{k\in\mathbb{N}}$ where
    \begin{equation}
    E_k := \begin{bsmallmatrix}0\\\sin\left(k\pi\frac{\theta_i-\theta}{\theta_i})\right)\end{bsmallmatrix}.
    \end{equation}
    This canonical sequence $\mathcal{S}_E$ forms a complete orthogonal basis of~$\mathcal{H}$. 
    By comparing the sequences $\mathcal{S}_E$ and $\mathcal{S}_F$ on $\mathcal{H}$, we obtain
    \begin{equation}
        \sum_{k=n}^{\infty}\lVert E_k-F_k \rVert^2 \leq \sum_{k=n}^{\infty} \underset{k\to\infty}{\mathcal{O}}\left(\frac{1}{k^2}\right) < \infty.
    \end{equation}
    Modified Bari's theorem in~\cite[Thm~6.3]{Guo2001} concludes the proof.
\end{proof}

As a consequence of Lemma~\ref{lem:riesz}, system~\eqref{eq:system} with an initial condition $\begin{bsmallmatrix}x(0)\\z(0)\end{bsmallmatrix}$ in $\mathcal{H}$ is well-posed. More precisely, the operator $\mathcal{A}$ in~\eqref{eq:A} generates a holomorphic semigroup for $\mathcal{H}$~\cite[Theorem 3.2.14]{Curtain2020}.

\subsection{Problem statement}

Recall first the definition of asymptotic stability~\cite{Karafyllis2011}.

\begin{defn}\label{def:stab}
    The equilibirum of system~\eqref{eq:system} is globally asymptotically stable if the two following properties hold.
    \begin{itemize}
    \item[(i)] Lyapunov stability: For all $\varepsilon>0$, there exists $\delta>0$ such that,  for any $\left\lVert\begin{bsmallmatrix}x_0\\z_0\end{bsmallmatrix}\right\rVert\leq \delta$, we have $\left\lVert\begin{bsmallmatrix}x(t)\\z(t)\end{bsmallmatrix}\right\rVert \leq \varepsilon$, $\forall t\geq 0$.
    \item[(ii)] Global attractivity: For any $\begin{bsmallmatrix}x_0\\z_0\end{bsmallmatrix}\in\mathcal{H}$, the solution $\begin{bsmallmatrix}x(t)\\z(t)\end{bsmallmatrix}\in\mathcal{H}$ converges to the origin as $t\to\infty$.
    \end{itemize} 
\end{defn}

In \cite[Chapter~8]{Karafyllis2019}, stability properties are discussed for reaction coefficient $\lambda<0$ and state matrix $A$ Hurwitz. In~\cite{Bajodek2022,Baudouin2019}, cases of unstable PDE and unstable ODE are respectively studied and present numerical formulations to verify sufficient conditions of stability.

In the rest of the paper, we focus on necessary conditions of stability and aim at obtaining numerical conditions of instability. Using spectral and temporal approaches, respectively in Sections~III and IV, we determine when the assumptions (i)-(ii) in Definition~\ref{def:stab} do not hold. In other words, we wish to solve the following problem.

\begin{pb}
Determine sets on the space of parameters $(A,B,C,\nu,\lambda,\theta_i,\theta_o)\in\mathbb{R}^{n_x\times n_x}\times \mathbb{R}^{n_x\times 1}\times \mathbb{R}^{1\times n_x} \times \mathbb{R}^4$ in which system~\eqref{eq:system} is not globally asymptotically stable.
\end{pb}

\section{Spectral analysis}
\label{sec:spectral}

Stability properties are often characterized by the poles' location. Root locus analysis enables to study the characteristic roots in terms of variation of the system parameters. For transport-ODE interconnections (time-delay systems), the literature abounds~\cite{Niculescu2010,Sipahi2011} and numerical solutions are proposed~\cite{Breda2014,Fioravanti2012}.
Here, we follow a similar path to study reaction-diffusion-ODE systems. 

\subsection{Spectral condition}

From~\cite[Theorem 3.2.8]{Curtain2020}, the following general theorem is stated. 

\begin{thm}\label{thm:spectral}
    System~\eqref{eq:system} is globally asymptotically stable if and only if all the solutions of~\eqref{eq:charac} have a strictly negative real part.
\end{thm}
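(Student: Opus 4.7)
The plan is to reduce Theorem~\ref{thm:spectral} to the spectrum-determined growth property that follows from the Riesz basis structure established in Lemma~\ref{lem:riesz}, and then to leverage the asymptotic distribution~\eqref{eq:assymp1} of the roots to move from the spectral bound to a genuine exponential decay rate.

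For the necessity direction, I would argue by contraposition. Suppose some characteristic root $s_{k_0}$ satisfies $\mathrm{Re}(s_{k_0})\geq 0$, and let $\mathcal{F}_{k_0}\in\mathcal{D}$ be an associated generalized characteristic function. Then the trajectory issued from $\begin{bsmallmatrix}x(0)\\z(0)\end{bsmallmatrix}=\mathcal{F}_{k_0}$ stays in the finite-dimensional invariant subspace attached to $s_{k_0}$ and, by a Jordan-form computation on that subspace, takes the form $p(t)e^{s_{k_0}t}$ for some nonzero polynomial $p$. Its norm is bounded below by $c\,e^{\mathrm{Re}(s_{k_0})t}$, hence does not converge to zero. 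Rescaling $\mathcal{F}_{k_0}$ by an arbitrarily small factor simultaneously breaks the attractivity part of Definition~\ref{def:stab}; in case $\mathrm{Re}(s_{k_0})>0$, it also breaks Lyapunov stability, which completes the contraposition.

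For the sufficiency direction, assume $\mathrm{Re}(s_k)<0$ for every $k\in\mathbb{N}$. From~\eqref{eq:assymp1} we read $s_k=\lambda-\nu(k\pi/\theta_i)^2+\underset{k\to\infty}{\mathcal{O}}(1)$, so $\mathrm{Re}(s_k)\to -\infty$; combined with Lemma~1 (finitely many roots in any compact set), this shows that $\omega_s:=\sup_k\mathrm{Re}(s_k)$ is attained and strictly negative. By Lemma~\ref{lem:riesz}, every initial condition in $\mathcal{H}$ admits a unique expansion $\sum_k c_k\mathcal{F}_k$ with $\sum_k|c_k|^2$ equivalent to $\left\lVert\begin{bsmallmatrix}x_0\\z_0\end{bsmallmatrix}\right\rVert^2$, and the holomorphic semigroup generated by $\mathcal{A}$ acts diagonally (up to the Jordan blocks) on this basis. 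One then obtains an exponential decay estimate at any rate strictly larger than $-\omega_s$, which is exactly the conclusion provided by~\cite[Theorem~3.2.8]{Curtain2020} applied to the present Riesz-spectral setting.

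The main obstacle I expect to handle with care is the treatment of the finitely many non-simple roots indexed by $k<n$ in~\eqref{eq:assymp1}: their generalized characteristic functions contribute polynomial-in-$t$ prefactors $t^{d-1}e^{s_k t}$. Because these contributions live in a finite-dimensional invariant subspace, they are absorbed by any exponential envelope with rate strictly between $\omega_s$ and $0$; this is precisely the subtle point that the cited spectrum-determined growth theorem packages, and the remainder of the argument is routine bookkeeping with the Riesz-basis norm equivalence.
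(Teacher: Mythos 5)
Your proof is correct and follows essentially the same route as the paper: both rest on the Riesz basis of generalized characteristic functions from Lemma~\ref{lem:riesz} and the resulting modal expansion of the semigroup (with polynomial-in-$t$ prefactors confined to the finitely many non-simple modes), reducing global asymptotic stability to the sign of $\sup_{k}\mathrm{Re}(s_k)$. The one point where you are more explicit than the paper is in invoking the asymptotics~\eqref{eq:assymp1} to get $\mathrm{Re}(s_k)\to-\infty$, so that ``all roots have strictly negative real part'' genuinely yields $\sup_{k}\mathrm{Re}(s_k)<0$; the paper's proof passes over this step silently.
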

\begin{proof}
We denote by $\{\mathcal{F}_k\}_{k\in\mathbb{N}}$ and $\{\mathcal{F}_k^\ast\}_{k\in\mathbb{N}}$ the generalized characteristic functions associated to the characteristic roots $\{\sigma_k\}_{k\in\mathbb{N}}$ of $\mathcal{A}$ and $\{\sigma_k^\ast\}_{k\in\mathbb{N}}$ of  $\mathcal{A}^\ast$, such that $\{\mathcal{F}_k\}_{k\in\mathbb{N}}$ and $\{\mathcal{F}_k^\ast\}_{k\in\mathbb{N}}$ are biorthogonal. We also denote by $\delta_k$ the dimension of the $k$-th generalized characteristic space, which verify $\delta_k=1$, for any $k\geq n$ according to the proof of Lemma~\ref{lem:riesz}. The semigroup $\mathcal{T}(t)$ generated by the operator $\mathcal{A}$ is then given by
\begin{equation}\label{eq:T}
\begin{aligned}
    \mathcal{T}(t)\begin{bsmallmatrix}x_0\\z_0\end{bsmallmatrix} =& \sum_{k=1}^{n-1}\sum_{d=0}^{\delta_k}\alpha_{k,d}t^de^{s_k t}
\scal{\begin{bsmallmatrix}x_0\\z_0\end{bsmallmatrix}}{\mathcal{F}_k^\ast}\mathcal{F}_k\\
         &+ \sum_{k=n}^{\infty}e^{s_k t}
\scal{\begin{bsmallmatrix}x_0\\z_0\end{bsmallmatrix}}{\mathcal{F}_k^\ast}\mathcal{F}_k, \quad t\geq 0,
\end{aligned}
\end{equation}
for some scalars $\alpha_{k,d}$ and for any $\begin{bsmallmatrix}x_0\\z_0\end{bsmallmatrix}$ in $\mathcal{H}$.
The trajectories given by $\begin{bsmallmatrix}x(t)\\z(t)\end{bsmallmatrix}=\mathcal{T}(t)\begin{bsmallmatrix}x_0\\z_0\end{bsmallmatrix}$ satisfy both items~(i) and (ii) in Definition~\ref{def:stab} if and only if $\underset{k\in\mathbb{N}}{\mathrm{sup}}\mathrm{Re}(s_k)<0$, which concludes the proof.
\end{proof}

\subsection{Modified spectral condition}

With the change of variable $\sigma=\sqrt{\frac{s-\lambda}{\nu}}\theta_i$, the characteristic equation~\eqref{eq:charac} rewrites as
\begin{equation}\label{eq:tdscharac}
    \mathrm{det}((\nu\sigma^2+\lambda)I_{n_x}-A-B\bar{H}(\sigma)C) = 0,
\end{equation}
where the function $\bar{H}$ is given by
\begin{equation}\label{eq:Hsigma}
    \bar{H}(\sigma) := \frac{\sigma}{\theta_i}\begin{bmatrix}e^{\sigma\theta_o/\theta_i}\\-e^{-\sigma\theta_o/\theta_i}\end{bmatrix}^{\top}\begin{bmatrix} 1 & 1 \\ e^{\sigma} & e^{-\sigma} \end{bmatrix}^{-1} \begin{bmatrix}1\\0\end{bmatrix}.
\end{equation}

\begin{cor}\label{cor:spectral}
System~\eqref{eq:system} is globally asymptotically stable if and only if all the solutions $\sigma$ of~\eqref{eq:tdscharac} satisfy 
\begin{equation}\label{eq:spectralcond}
\left(\mathrm{Re}(\sigma/\theta_i)\right)^2\!-\!\left(\mathrm{Im}(\sigma/\theta_i)\right)^2 < - \frac{\lambda}{\nu}.
\end{equation}
\end{cor}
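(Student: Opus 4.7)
The plan is to deduce Corollary~\ref{cor:spectral} directly from Theorem~\ref{thm:spectral} by performing the change of variable $\sigma = \sqrt{(s-\lambda)/\nu}\,\theta_i$, or equivalently $s = \nu(\sigma/\theta_i)^2 + \lambda$, and re-expressing the condition $\mathrm{Re}(s)<0$ in terms of $\sigma$. Since Theorem~\ref{thm:spectral} already gives the ``if and only if'' characterization in terms of the roots of $\Delta(s)$, nothing probabilistic or dynamical remains to be done: the whole task reduces to showing (a)~that the two characteristic equations~\eqref{eq:charac} and~\eqref{eq:tdscharac} have the same zero set under the substitution, and (b)~that the half-plane $\{\mathrm{Re}(s)<0\}$ pulls back to the parabolic region described by~\eqref{eq:spectralcond}.

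For step~(a), I would plug the relation $s=\nu(\sigma/\theta_i)^2+\lambda$ into the definition~\eqref{eq:H} of $H(s)$. Using $\sqrt{(s-\lambda)/\nu} = \sigma/\theta_i$ (or its negative) and $\mathrm{sinhc}(\sigma)=\sinh(\sigma)/\sigma$, a short manipulation yields $H(s) = -\sigma\cosh\!\bigl(\sigma(\theta_i-\theta_o)/\theta_i\bigr)/(\theta_i\sinh(\sigma)) = \bar H(\sigma)$, after simplifying the $2\times 2$ inverse appearing in~\eqref{eq:Hsigma}. Substituting into $\Delta(s)$ then gives exactly the left-hand side of~\eqref{eq:tdscharac}, so $s$ solves~\eqref{eq:charac} if and only if $\sigma$ solves~\eqref{eq:tdscharac}.

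For step~(b), I would decompose $\sigma/\theta_i = \alpha + \mathbf i\beta$ with $\alpha,\beta\in\mathbb R$ and compute
\begin{equation*}
\mathrm{Re}(s) \;=\; \nu\bigl(\alpha^2-\beta^2\bigr)+\lambda \;=\; \nu\Bigl(\bigl(\mathrm{Re}(\sigma/\theta_i)\bigr)^2-\bigl(\mathrm{Im}(\sigma/\theta_i)\bigr)^2\Bigr)+\lambda.
\end{equation*}
Since $\nu>0$, the inequality $\mathrm{Re}(s)<0$ is equivalent to $(\mathrm{Re}(\sigma/\theta_i))^2-(\mathrm{Im}(\sigma/\theta_i))^2 < -\lambda/\nu$, which is~\eqref{eq:spectralcond}. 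Combining with Theorem~\ref{thm:spectral} then yields the stated equivalence.

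The one nuisance to address is the two-valued nature of the square root. I would observe that both $\bar H$ (by direct inspection using the evenness of $\cosh$ and the odd factor $\sigma/\sinh(\sigma)$) and the condition~\eqref{eq:spectralcond} are invariant under $\sigma\mapsto -\sigma$, so the choice of branch is immaterial: each root $s$ of~\eqref{eq:charac} corresponds to a pair $\{\pm\sigma\}$ of roots of~\eqref{eq:tdscharac}, both of which either satisfy or violate~\eqref{eq:spectralcond} simultaneously. This is the only bookkeeping step I expect to cause friction; everything else is a plug-and-chug verification.
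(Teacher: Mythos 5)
Your proposal is correct and follows essentially the same route as the paper: substitute $s=\nu(\sigma/\theta_i)^2+\lambda$, observe that roots of~\eqref{eq:tdscharac} correspond to roots of~\eqref{eq:charac}, and rewrite $\mathrm{Re}(s)<0$ using $\nu>0$ to obtain~\eqref{eq:spectralcond} via Theorem~\ref{thm:spectral}. The paper's proof is terser; your explicit verification that $H(s)=\bar H(\sigma)$ and your remark on the branch invariance under $\sigma\mapsto-\sigma$ are details the paper leaves implicit, but they do not change the argument.
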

\begin{proof}
    Assume that $\sigma=\mathrm{Re}(\sigma)+\mathbf{i}\mathrm{Im}(\sigma)$ is solution of~\eqref{eq:tdscharac}. Then, $\frac{s-\lambda}{\nu}=\mathrm{Re}(\sigma/\theta_i)^2-\mathrm{Im}(\sigma/\theta_i)^2 + 2\mathbf{i}\mathrm{Re}(\sigma/\theta_i)\mathrm{Im}(\sigma/\theta_i)$
    is solution of~\eqref{eq:charac}. From Theorem~\ref{thm:spectral}, $\mathrm{Re}(s)<0$ is a necessary and sufficient condition of asymptotic stability. Since $\nu>0$, it can therefore be rewritten equivalently as in~\eqref{eq:spectralcond}.
\end{proof}

\begin{figure}[!t]
    \centering
    \begin{subfigure}{0.45\linewidth}
    \begin{tikzpicture}[line cap=round,line join=round,>=triangle 45,x=0.4cm,y=0.4cm]
    \begin{axis}[
    x=0.4cm,y=0.4cm,
    axis lines=none,
    ymajorgrids=false,
    xmajorgrids=false,
    xmin=-4,
    xmax=5.5,
    ymin=-2,
    ymax=3,]
    \clip(-4,-3.5) rectangle (5.5,3.5);
    \draw [->,line width=1pt] (-3.5,0)-- (3.5,0);
    \draw [->,line width=1pt] (0,-2)-- (0,2);
    \filldraw [fill=green!80!black,nearly transparent] (-3.5,-2) rectangle (0,2);
    \filldraw [fill=red!80!black,nearly transparent] (0,-2) rectangle (3.5,2);
    \draw (-1.5,0.5) node {Stable};
    \draw (-1.5,-0.5) node {roots};
    \draw (1.5,0.5) node {Unstable};
    \draw (1.5,-0.5) node {roots};
    \draw (4.5,-0.2) node {$\mathrm{Re}(s)$};
    \draw (0.5,2.5) node {$\mathrm{Im}(s)$};
    \end{axis}
    \end{tikzpicture}
    \subcaption{Stability condition \textit{before} the change of variable.}
    \end{subfigure}
    \hfill
    \begin{subfigure}{0.45\linewidth}
    \begin{tikzpicture}[line cap=round,line join=round,>=triangle 45,x=0.4cm,y=0.4cm]
    \begin{axis}[
    x=0.4cm,y=0.4cm,
    axis lines=none,
    ymajorgrids=false,
    xmajorgrids=false,
    xmin=-4,
    xmax=5.5,
    ymin=-2,
    ymax=3,]
    \clip(-4,-3.5) rectangle (5.5,3.5);
    \draw [->,line width=1pt] (-3.5,0)-- (3.5,0);
    \draw [->,line width=1pt] (0,-2)-- (0,2);
    \draw [fill=red!80!black,nearly transparent] (0,0) -- plot [domain=0:3.5] (\x,{min(\x,2)}) -- (3.5,0) -- cycle;
    \draw [fill=red!80!black,nearly transparent] (0,0) -- plot [domain=0:3.5] (\x,{-min(\x,2)}) -- (3.5,0) -- cycle;
\draw [fill=red!80!black,nearly transparent] (0,0) -- plot [domain=0:-3.5] (\x,-{min(-\x,2)}) -- (-3.5,0) -- cycle;
    \draw [fill=red!80!black,nearly transparent] (0,0) -- plot [domain=0:-3.5] (\x,{min(-\x,2)}) -- (-3.5,0) -- cycle;
    \draw [fill=green!80!black,nearly transparent] plot [domain=-2:2] (\x,{abs(\x)}) -- plot [domain=-2:2] (\x,2) -- cycle;
    \draw [fill=green!80!black,nearly transparent] plot [domain=-2:2] (\x,-{abs(\x)}) -- plot [domain=-2:2] (\x,-2) -- cycle;
    \draw (4.5,-0.2) node {$\mathrm{Re}(\sigma)$};
    \draw (0.5,2.5) node {$\mathrm{Im}(\sigma)$};
    \end{axis}
    \end{tikzpicture}
    \subcaption{Stability condition \textit{after} the change of variable, for $\lambda = 0$.}
    \end{subfigure}

    \begin{subfigure}{0.45\linewidth}
    \begin{tikzpicture}[line cap=round,line join=round,>=triangle 45,x=0.4cm,y=0.4cm]
    \begin{axis}[
    x=0.4cm,y=0.4cm,
    axis lines=none,
    ymajorgrids=false,
    xmajorgrids=false,
    xmin=-4,
    xmax=5.5,
    ymin=-2,
    ymax=3,]
    \clip(-4,-3.5) rectangle (5.5,3.5);
    \draw [->,line width=1pt] (-3.5,0)-- (3.5,0);
    \draw [->,line width=1pt] (0,-2)-- (0,2);
    \draw [fill=red!80!black,nearly transparent] (0,0) -- (1,0) -- plot [domain=1:3.5] (\x,{min({sqrt(\x*\x-1)},2)}) -- (3.5,0) -- cycle;
    \draw [fill=red!80!black,nearly transparent] (0,0) -- (1,0) -- plot [domain=1:3.5] (\x,{-min({sqrt(\x*\x-1)},2)}) -- (3.5,0) -- cycle;
\draw [fill=red!80!black,nearly transparent] (0,0) -- (-1,0) -- plot [domain=-1:-3.5] (\x,{min({sqrt(\x*\x-1)},2)}) -- (-3.5,0) -- cycle;
    \draw [fill=red!80!black,nearly transparent] (0,0) -- (-1,0) -- plot [domain=-1:-3.5] (\x,{-min({sqrt(\x*\x-1)},2)}) -- (-3.5,0) -- cycle;
    \draw [fill=green!80!black,nearly transparent] plot [domain=-{sqrt(5)}:{sqrt(5)}] (\x,{sqrt(max(0,(\x*\x-1)))}) -- plot [domain=-{sqrt(5)}:{sqrt(5)}] (\x,2) -- cycle;
    \draw [fill=green!80!black,nearly transparent] plot [domain=-{sqrt(5)}:{sqrt(5)}] (\x,-{sqrt(max(0,(\x*\x-1)))}) -- plot [domain=-{sqrt(5)}:{sqrt(5)}] (\x,-2) -- cycle;
    \draw (4.5,-0.2) node {$\mathrm{Re}(\sigma)$};
    \draw (0.5,2.5) node {$\mathrm{Im}(\sigma)$};
    \end{axis}
    \end{tikzpicture}
    \subcaption{Stability condition \textit{after} the change of variable, for $\lambda < 0$.}
    \end{subfigure}
    \hfill
    \begin{subfigure}{0.45\linewidth}
    \begin{tikzpicture}[line cap=round,line join=round,>=triangle 45,x=0.4cm,y=0.4cm]
    \begin{axis}[
    x=0.4cm,y=0.4cm,
    axis lines=none,
    ymajorgrids=false,
    xmajorgrids=false,
    xmin=-4,
    xmax=5.5,
    ymin=-2,
    ymax=3,]
    \clip(-4,-3.5) rectangle (5.5,3.5);
    \draw [->,line width=1pt] (-3.5,0)-- (3.5,0);
    \draw [->,line width=1pt] (0,-2)-- (0,2);
    \draw [fill=red!80!black,nearly transparent] (0,0) -- (0,1) -- plot [domain=0:3.5] (\x,{min({sqrt(\x*\x+1)},2)}) -- (3.5,0) -- cycle;
    \draw [fill=red!80!black,nearly transparent] (0,0) -- (0,-1) -- plot [domain=0:3.5] (\x,-{min({sqrt(\x*\x+1)},2)}) -- (3.5,0) -- cycle;
    \draw [fill=red!80!black,nearly transparent] (0,0) -- (0,1) -- plot [domain=0:-3.5] (\x,{min({sqrt(\x*\x+1)},2)}) -- (-3.5,0) -- cycle;
    \draw [fill=red!80!black,nearly transparent] (0,0) -- (0,-1) -- plot [domain=0:-3.5] (\x,-{min({sqrt(\x*\x+1)},2)}) -- (-3.5,0) -- cycle;
    \draw [fill=green!80!black,nearly transparent] plot [domain=-{sqrt(3)}:{sqrt(3)}] (\x,{sqrt(\x*\x+1)}) -- plot [domain=-{sqrt(3)}:{sqrt(3)}] (\x,2) -- cycle;
    \draw [fill=green!80!black,nearly transparent] plot [domain=-{sqrt(3)}:{sqrt(3)}] (\x,-{sqrt(\x*\x+1)}) -- plot [domain=-{sqrt(3)}:{sqrt(3)}] (\x,-2) -- cycle;
    \draw (4.5,-0.2) node {$\mathrm{Re}(\sigma)$};
    \draw (0.5,2.5) node {$\mathrm{Im}(\sigma)$};
    \end{axis}
    \end{tikzpicture}
    \subcaption{Stability condition \textit{after} the change of variable, for $\lambda > 0$.}
    \end{subfigure}
    
    \caption{Illustration of Theorem~\ref{thm:spectral} and Corollary~\ref{cor:spectral}.}
    \label{fig:spectralthm}
\end{figure}
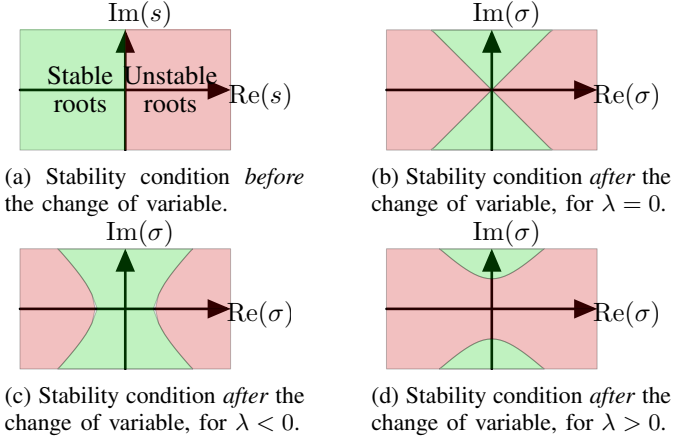

Even though Theorem~\ref{thm:spectral} and Corollary~\ref{cor:spectral} present necessary and sufficient conditions for global asymptotic stability of system~\eqref{eq:system} (see Fig.~\ref{fig:spectralthm}), they require the knowledge of the entire point spectrum of $\mathcal{A}$, which is difficult to obtain in general. On the other hand, it is possible to compute a subset of the point spectrum. Therefore, Theorem~\ref{thm:spectral} or Corollary~\ref{cor:spectral} will only be used as sufficient instability conditions, that is, if at least one characteristic root has a positive real part, then the origin of~\eqref{eq:system} is unstable.

\begin{rem}
The conditions in Theorem~\ref{thm:spectral} or Corollary~\ref{cor:spectral} are similar to the ones detailed in~\cite{Bonnet2000} for fractional differential systems.
\end{rem}
\begin{rem}
In practice, note that the solutions of~\eqref{eq:tdscharac} are related to the eigenvalue problem of a neutral time-delay system. The characteristic equation~\eqref{eq:tdscharac} is then easier to study than~\eqref{eq:charac}. Indeed, matrix pencils~\cite{Niculescu2005} or discretization~\cite{Breda2004} techniques can be used to plot the root locus graphic.
\end{rem}

\section{Lyapunov analysis}
\label{sec:lyapunov}

The stability analysis of linear infinite-dimensional systems can also be studied by Lyapunov methods~\cite{Datko1968,Datko1970}. 
For transport-ODE interconnections, a necessary and sufficient condition of stability is based on the positivity of a converse Lyapunov operator written in terms of the Lyapunov delay matrix~\cite{Kharitonov2006}. Approximate solutions and numerical tests using a finite number of parameters are then proposed~\cite{Gomez2021,Mondie2022}.
This section presents a similar formulation to obtain instability certificates for reaction-diffusion-ODE systems. 

\subsection{Lyapunov functional}

For any $\begin{bsmallmatrix}x\\z\end{bsmallmatrix}\in\mathcal{H}$, let us introduce the quadratic functional 
\begin{equation}\label{eq:lyap}
\begin{aligned}
    \mathcal{V}(\begin{bsmallmatrix}x\\z\end{bsmallmatrix}) &= 
    x^\top P x + 2\int_0^{\theta_i} x^\top Q(\theta)z(\theta)\mathrm{d}\theta\\
    &\quad+ \int_0^{\theta_i} \int_0^{\theta_i} z^\top(\theta_1)T(\theta_1,\theta_2)z(\theta_2)\mathrm{d}\theta_1\mathrm{d}\theta_2.
\end{aligned}
\end{equation}
with $P\in\mathbb{S}^{n_x}_+$, $T(\theta_1,\theta_2)=T^{\top}(\theta_2,\theta_1)$ and bounded functions $Q\in L^2(0,\theta_i)^{n_x}$ and $T\in L^2((0,\theta_i)\times(0,\theta_i))$.

For any $\begin{bsmallmatrix}x\\z\end{bsmallmatrix}\in\mathcal{D}$, its time derivative along the trajectories of system~\eqref{eq:system} is given by
\begin{align}\label{eq:lyapder}
    &\frac{1}{2}\dot{\mathcal{V}}(\begin{bsmallmatrix}x\\z\end{bsmallmatrix}) \!=\! x^\top\!P(Ax \!+\! B \partial_\theta z(\theta_o)) \!+\! \partial_{\theta}z^\top\!(\theta_o)\!\!\int_0^{\theta_i}\!\!\! B^\top Q(\theta) z(\theta)\mathrm{d}\theta \nonumber\\
    & + x^\top \int_0^{\theta_i} ( A^\top Q(\theta)z(\theta) \!+\! Q(\theta)(\nu\partial_{\theta\theta}+\lambda)z(\theta))\mathrm{d}\theta  \nonumber\\
    & + \int_0^{\theta_i}\int_0^{\theta_i}z^\top(\theta_1)T(\theta_1,\theta_2)(\nu\partial_{\theta_2\theta_2}+\lambda)z(\theta_2)\mathrm{d}\theta_1\mathrm{d}\theta_2.
\end{align}
This last expression will be used below in the proof of Theorem~\ref{thm:conv}.

\begin{thm}\label{thm:lyap}
System~\eqref{eq:system} is globally asymptotically stable if and only if there exist scalars $\gamma_1,\gamma_2>0$ and a Lyapunov functional~$\mathcal{V}$ of the form~\eqref{eq:lyap} satisfying
\begin{subequations}\label{eq:ineq}
\begin{empheq}[left=\empheqlbrace]{align}
    \gamma_1\left\lVert \begin{bsmallmatrix}x\\z\end{bsmallmatrix} \right\rVert^2 &\leq \mathcal{V}(\begin{bsmallmatrix}x\\z\end{bsmallmatrix}),&&\forall \begin{bsmallmatrix}x\\z\end{bsmallmatrix}\in\mathcal{H},\label{eq:ineq1}\\
    \dot{\mathcal{V}}(\begin{bsmallmatrix}x\\z\end{bsmallmatrix}) &\leq -\gamma_2\left\lVert \begin{bsmallmatrix}x\\z\end{bsmallmatrix}\right\rVert^2,&&\forall \begin{bsmallmatrix}x\\z\end{bsmallmatrix}\in\mathcal{D}.\label{eq:ineq2}
\end{empheq}
\end{subequations}
\end{thm}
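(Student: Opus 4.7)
The proof splits into two directions. The sufficiency direction follows a standard Lyapunov argument, while the necessity requires a converse construction based on the semigroup generated by $\mathcal{A}$.

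\textbf{Sufficiency ($\Leftarrow$).} Suppose a functional $\mathcal{V}$ of the form~\eqref{eq:lyap} satisfying~\eqref{eq:ineq1}--\eqref{eq:ineq2} exists. I would first observe that because $P\in\mathbb{S}_+^{n_x}$ is finite-dimensional and $Q,T$ are bounded (resp.\ in $L^2$), the Cauchy--Schwarz inequality yields an automatic upper bound
\[
\mathcal{V}(\begin{bsmallmatrix}x\\z\end{bsmallmatrix})\leq \gamma_3 \left\lVert\begin{bsmallmatrix}x\\z\end{bsmallmatrix}\right\rVert^2,\qquad \gamma_3>0.
\]
Combined with~\eqref{eq:ineq2}, this gives $\dot{\mathcal{V}}\leq -\frac{\gamma_2}{\gamma_3}\mathcal{V}$ along trajectories starting in $\mathcal{D}$, hence $\mathcal{V}(t)\leq e^{-\gamma_2 t/\gamma_3}\mathcal{V}(0)$ and, through~\eqref{eq:ineq1}, the exponential estimate $\|\mathcal{T}(t)\xi_0\|^2\leq \frac{\gamma_3}{\gamma_1}e^{-\gamma_2 t/\gamma_3}\|\xi_0\|^2$ for every $\xi_0\in\mathcal{D}$. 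Density of $\mathcal{D}$ in $\mathcal{H}$ and uniform boundedness of $\mathcal{T}(t)$ extend the bound to all $\xi_0\in\mathcal{H}$, yielding both items of Definition~\ref{def:stab}.

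\textbf{Necessity ($\Rightarrow$).} Suppose system~\eqref{eq:system} is globally asymptotically stable. From Theorem~\ref{thm:spectral}, $\sup_{k\in\mathbb{N}}\mathrm{Re}(s_k)<0$. The asymptotic distribution~\eqref{eq:assymp1} shows $\mathrm{Re}(s_k)\to-\infty$, so the spectral abscissa satisfies $\alpha_0:=\sup_{k\in\mathbb{N}}\mathrm{Re}(s_k)<0$. Since $\mathcal{A}$ generates a holomorphic semigroup (consequence of Lemma~\ref{lem:riesz}), the spectral bound coincides with the growth bound, and therefore there exist $M\geq 1$ and $\alpha\in(0,-\alpha_0)$ such that $\|\mathcal{T}(t)\|_{\mathcal{L}(\mathcal{H})}\leq M e^{-\alpha t}$. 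I then define the converse Lyapunov candidate
\[
\mathcal{V}(\xi)=\int_0^{\infty}\bigl\|\mathcal{T}(t)\xi\bigr\|^2\,\mathrm{d}t,\qquad \xi\in\mathcal{H}.
\]
Exponential decay makes this integral convergent, yielding $\mathcal{V}(\xi)\leq \frac{M^2}{2\alpha}\|\xi\|^2$. The lower bound $\mathcal{V}(\xi)\geq \gamma_1\|\xi\|^2$ is obtained by restricting to $t\in[0,\tau]$ for $\tau$ small enough, using strong continuity of $\mathcal{T}(t)$ at $t=0$. Differentiating along the trajectory gives the cleanest identity $\dot{\mathcal{V}}(\xi)=-\|\xi\|^2$ for $\xi\in\mathcal{D}$ (so $\gamma_2=1$).

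\textbf{Representation as in~\eqref{eq:lyap} --- the main obstacle.} The remaining task, which I expect to be the crux, is to show that the bilinear form $(\xi_1,\xi_2)\mapsto \int_0^\infty \scal{\mathcal{T}(t)\xi_1}{\mathcal{T}(t)\xi_2}\mathrm{d}t$ on $\mathcal{H}=\mathbb{R}^{n_x}\times L^2(0,\theta_i)$ admits kernels $P$, $Q(\theta)$, $T(\theta_1,\theta_2)$ of the required regularity. Decomposing $\xi=\begin{bsmallmatrix}x\\z\end{bsmallmatrix}$ into its $\mathbb{R}^{n_x}$ and $L^2$ components and using that $\mathcal{V}$ is a bounded quadratic form, the Riesz representation theorem on each block produces $P\in\mathbb{S}_+^{n_x}$ and bounded operators representing the cross and pure-$L^2$ parts. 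To obtain the explicit kernels $Q\in L^2(0,\theta_i)^{n_x}$ and $T\in L^2((0,\theta_i)^2)$ with $T(\theta_1,\theta_2)=T^\top(\theta_2,\theta_1)$, I would exploit the Riesz basis $\{\mathcal{F}_k\}$ provided by Lemma~\ref{lem:riesz}: writing $\mathcal{T}(t)$ in this basis, the smoothing effect of the holomorphic semigroup for $t>0$ together with the summability $\sum_k e^{2s_k t}$ turn the formal series for $T(\theta_1,\theta_2)$ into an $L^2$ kernel, and likewise for $Q$. Symmetry is inherited from $T(\theta_1,\theta_2)=\int_0^\infty \mathcal{T}(t)^*\mathcal{T}(t)$ being self-adjoint. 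This concludes the converse direction and the proof.
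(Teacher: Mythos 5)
Your architecture coincides with the paper's: sufficiency by the standard Lyapunov/Datko argument in $\mathcal{H}$, necessity by the converse operator $\mathcal{P}=\int_0^\infty\mathcal{T}^\ast(t)\mathcal{T}(t)\,\mathrm{d}t$ followed by an expansion in the Riesz basis of Lemma~\ref{lem:riesz} to identify the kernels $(P,Q,T)$. The sufficiency half is sound and in fact more explicit than the paper's one-line citation: the Cauchy--Schwarz upper bound, the Gr\"onwall step and the density extension are exactly what lies behind the cited result (only note that for each fixed $t$ the exponential bound extends from $\mathcal{D}$ to $\mathcal{H}$ by continuity of $\mathcal{T}(t)$ alone, so no a priori uniform bound on the semigroup is needed).

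The genuine gap is the lower bound~\eqref{eq:ineq1} for your converse candidate. Strong continuity of $\mathcal{T}(\cdot)$ at $t=0$ gives, for each fixed $\xi$, a time $\tau(\xi)$ with $\lVert\mathcal{T}(t)\xi\rVert\geq\tfrac{1}{2}\lVert\xi\rVert$ on $[0,\tau(\xi)]$, but $\tau(\xi)$ is not uniform over the unit sphere, so no single $\gamma_1>0$ follows from this argument. Worse, for this parabolic system the claimed bound is actually false for the candidate $\mathcal{V}(\xi)=\int_0^\infty\lVert\mathcal{T}(t)\xi\rVert^2\mathrm{d}t$: for $k\geq n$ the basis elements $\mathcal{F}_k$ are genuine eigenvectors, so $\mathcal{V}(\mathcal{F}_k)=\int_0^\infty e^{2\mathrm{Re}(s_k)t}\lVert\mathcal{F}_k\rVert^2\mathrm{d}t=\lVert\mathcal{F}_k\rVert^2/(2|\mathrm{Re}(s_k)|)$, and by~\eqref{eq:assymp1} one has $\mathrm{Re}(s_k)\sim-\nu(k\pi/\theta_i)^2\to-\infty$, whence $\mathcal{V}(\mathcal{F}_k)/\lVert\mathcal{F}_k\rVert^2\to 0$ and no $\gamma_1>0$ can exist. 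To close the necessity direction you must either weaken~\eqref{eq:ineq1} to strict positivity (which is all that Datko's converse theorem delivers), or modify the candidate, e.g.\ solve $\mathcal{A}^\ast\mathcal{P}+\mathcal{P}\mathcal{A}=-\mathcal{W}$ with a weight $\mathcal{W}$ whose action on the $k$-th mode grows like $|\mathrm{Re}(s_k)|$, or add $\epsilon\lVert\xi\rVert^2$ to $\mathcal{V}$ and absorb the resulting term $2\epsilon\scal{\xi}{\mathcal{A}\xi}$ using the dissipativity of the diffusion part. Be aware that the paper's own proof is no more explicit on this point --- it infers~\eqref{eq:ineq1} from the mere positivity of $\mathcal{P}$ --- so the published argument needs the same repair; but as written, your justification does not supply it.
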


\begin{proof}
The sufficiency is verified by application of the Lyapunov theorem in the Hilbert space $\mathcal{H}$ as in~\cite[Corollary~2]{Datko1968}. The necessity follows from~\cite[Theorem~1]{Datko1968}. Indeed, assuming that the system is asymptotically stable, the converse Lyapunov operator $\mathcal{P}$ solution of the Lyapunov equation $\mathcal{A}^\ast \mathcal{P} + \mathcal{P}\mathcal{A} = -\mathcal{I}$ is expressed as
\begin{equation}\label{eq:Pconv}
\begin{aligned}
    &\mathcal{P}\begin{bsmallmatrix}x_0\\z_0\end{bsmallmatrix} = -\int_0^{\infty} \mathcal{T}^\ast(t)\mathcal{T}(t)\begin{bsmallmatrix}x_0\\z_0\end{bsmallmatrix}\mathrm{d}t,
\end{aligned}
\end{equation}
where $\mathcal{T}$ is the semigroup generated by $\mathcal{A}$.
Using the expression~\eqref{eq:T} of $\mathcal{T}$ leads to
\begin{equation}\label{eq:Pconv2}
\mathcal{P}\begin{bsmallmatrix}x_0\\z_0\end{bsmallmatrix} = \sum_{k=1}^{\infty}\sum_{k^\prime=1}^{\infty}\mathcal{I}_{k,k^\prime}\scal{\begin{bsmallmatrix}x_0\\z_0\end{bsmallmatrix}}{\mathcal{F}_k^\ast}\scal{\mathcal{F}_k}{\mathcal{F}_{k^\prime}^\ast}\mathcal{F}_{k^\prime},
\end{equation}
 where the integral $\mathcal{I}_{k,k^\prime}$ is given by
\begin{equation*}
    \mathcal{I}_{k,k^\prime} = -\int_0^{\infty} \beta_{k,k^\prime}(t) e^{(s_k+s_{k^\prime}^\ast)t} \mathrm{d}t,
\end{equation*}
with
\begin{equation*}
    \beta_{k,k^\prime}(t) = \left\{ 
    \begin{array}{ll}
        \displaystyle \!\!\sum_{d=0}^{\delta_k}\!\sum_{d^\prime=0}^{\delta_{k^\prime}} \!\!\left(\!\alpha_{k,d}t^d\right)\!\!\left(\!\alpha_{k^\prime,d^\prime}^\ast t^{d^\prime}\!\right)\!\! &\text{ if } k<n \text{ or } k^\prime <n,\\
        \displaystyle 1 &\text{ if } k\geq n \text{ and } k^\prime \geq n,
    \end{array}
    \right.
\end{equation*}
and with scalars $\alpha_{k,d}$ introduced in~\eqref{eq:T}.
The biorthogonality of the generalized characteristic functions $\{\mathcal{F}_k\}_{k\in\mathbb{N}}$ of $\mathcal{A}$, $\{\mathcal{F}_k^\ast\}_{k\in\mathbb{N}}$ of $\mathcal{A}^\ast$ simplifies the expression 
\begin{equation}
    \mathcal{P}\begin{bsmallmatrix}x_0\\z_0\end{bsmallmatrix} = \sum_{k=1}^{\infty}\mathcal{I}_{k,k}\scal{\begin{bsmallmatrix}x_0\\z_0\end{bsmallmatrix}}{\mathcal{F}_k^\ast}\mathcal{F}_k,
\end{equation}
where the integral is computed with integration by parts
\begin{equation*}
\mathcal{I}_{k,k} = \left\{ 
    \begin{array}{ll}
        \displaystyle\sum_{d=0}^{\delta_k}\sum_{d^\prime=0}^{\delta_{k}} \frac{(-1)^{d+d^\prime}\alpha_{k,d}\alpha_{k,d^\prime}^\ast}{(2\mathrm{Re}(s_k))^{d+d^\prime +1}} &\text{ if } k<n,\\
        \displaystyle-\frac{1}{2\mathrm{Re}(s_k)} &\text{ otherwise.}\\
    \end{array}
    \right.
\end{equation*}
Therefore, the converse Lyapunov functional 
\begin{equation}
    \mathcal{V}(\begin{bsmallmatrix}x_0\\z_0\end{bsmallmatrix})= \scal{\begin{bsmallmatrix}x_0\\z_0\end{bsmallmatrix}}{\mathcal{P}\begin{bsmallmatrix}x_0\\z_0\end{bsmallmatrix}}, \quad \forall \begin{bsmallmatrix}x_0\\z_0\end{bsmallmatrix}\in\mathcal{H},
\end{equation}
can be written in the form~\eqref{eq:lyap}. On one side,
\cite[Theorem~2]{Datko1968} ensures the boundedness of $\mathcal{P}$ and gives $\mathrm{Re}(s_k)<0$ for all $k\in\mathbb{N}$ which guarantees the positivity of $\mathcal{P}$ and the validity of~\eqref{eq:ineq1}. On the other side, along the trajectories of system~\eqref{eq:system}, the inequality~\eqref{eq:ineq2} holds. This completes the proof. 
\end{proof}

\subsection{Direct Lyapunov condition}

For a given integer $n\in\mathbb{N}$, consider a set of linearly independent functions~$\{\varphi_{k}\}_{k\in\{0,\dots,n-1\}}$ in $L^2(0,\theta_i)$ which is
\begin{itemize}
\item[(P1)] orthonormal with the usual $L^2(0,\theta_i)$ scalar product,
\item[(P2)] closed under differentiation.
\end{itemize}
For instance, sets of Legendre polynomials $\{l_k\}_{k\in\mathbb{N}}$~or trigonometric functions $\{\cos(k\pi\frac{\theta}{\theta_i}),\sin(k\pi\frac{\theta}{\theta_i})\}_{k\in\mathbb{N}}$ satisfy properties (P1)-(P2).

In this subsection, we intend to perform a projection of the infinite-dimensional spaces $\mathcal{H}$ and $\mathcal{D}$~\eqref{eq:domains} into the finite-dimensional subspace 
spanned by the sequence~$\mathcal{S}_n:=\{\mathbf{e}_1^{n_x},\cdots,\mathbf{e}_{n_x}^{n_x},\varphi_k\}_{k\in\{0,\dots,n-1\}}$, that are
\begin{equation*}
H_n := \mathrm{span}(\mathcal{S}_n)\subset \mathcal{H},\quad
D_n := H_n \cap \mathcal{D} \subset \mathcal{D}.
\end{equation*}

For any $(x,z)\in H_n$, using the orthonormal property~(P1), the state $z$ can be decomposed in a finite number of terms as follows
\begin{equation}
    z(\theta) \!=\! \sum_{k=0}^{n-1}\varphi_k(\theta)\!\!\int_0^{\theta_i}\!\!\! \varphi_k(\tau)z(\tau)\mathrm{d}\tau \!=\! \Phi_n^\top(\theta) \!\!\int_0^{\theta_i}\!\!\! \Phi_n(\tau)z(\tau)\mathrm{d}\tau,
\end{equation}
with 
$\Phi_n=\begin{bsmallmatrix}\varphi_0&\cdots&\varphi_{n-1}\end{bsmallmatrix}^\top\in \mathbb{R}^{n\times 1}$,
the Lyapunov functional in~\eqref{eq:lyap} is equal to
\begin{equation}
    \mathcal{V}(\begin{bsmallmatrix}x\\z\end{bsmallmatrix}) = \begin{bsmallmatrix}x\\\int_0^{\theta_i} \Phi_n(\tau)z(\tau)\mathrm{d}\tau\end{bsmallmatrix}^\top \Psi_n^+ \begin{bsmallmatrix}x\\\int_0^{\theta_i} \Phi_n(\tau)z(\tau)\mathrm{d}\tau\end{bsmallmatrix},
\end{equation}
with matrices
\begin{equation}\label{eq:psin1}
\begin{aligned}
    \Psi_n^+ &= \begin{bmatrix}P&Q_n\\Q_n^\top&T_n\end{bmatrix} \in \mathbb{R}^{(n_x+n)\times (n_x +n)},\quad P \in \mathbb{S}^{n_x}_+,\\
    Q_n &= \int_0^{\theta_i}Q(\theta)\Phi_n^\top(\theta)\mathrm{d}\theta\in\mathbb{R}^{n_x\times n},\\
    T_n &= \int_0^{\theta_i}\int_0^{\theta_i}\Phi_n(\theta_1)T( \theta_1,\theta_2)\Phi_n^\top(\theta_2)\mathrm{d}\theta_1\theta_2 \in\mathbb{S}^{n}_+.
\end{aligned}
\end{equation}

For any $(x,z)\in H_n$, using the differentiation property~(P2),
\begin{equation}
    \Phi_n^\prime(\theta) = \Delta_n \Phi_n(\theta),\qquad\forall \theta\in[0,\theta_i],
\end{equation}
for some differentiation matrix $\Delta_n\in\mathbb{R}^{n\times n}$ with known coefficients, the Lyapunov functional's derivative~\eqref{eq:lyapder} is equal to
\begin{equation}
    \dot{\mathcal{V}}(\begin{bsmallmatrix}x\\z\end{bsmallmatrix}) = \begin{bsmallmatrix}x\\\int_0^{\theta_i}\Phi_n(\tau)z(\tau)\mathrm{d}\tau\end{bsmallmatrix}^\top \Psi_n^- \begin{bsmallmatrix}x\\\int_0^{\theta_i}\Phi_n(\tau)z(\tau)\mathrm{d}\tau\end{bsmallmatrix},
\end{equation}
with matrices
\begin{equation}\label{eq:psin2}
\begin{aligned}
    \Psi_n^-&= \mathrm{He}\left(\begin{bmatrix}\Psi_{xx}&\Psi_{xz}\\0&\Psi_{zz}\end{bmatrix}\right)\in \mathbb{R}^{(n_x+n)\times (n_x+n)},\\
    \Psi_{xx} &= PA \in\mathbb{R}^{n_x\times n_x},\\
    \Psi_{xz} &= PB\Phi_n^\top(\theta_o)\Delta_n^\top\!+\!(\lambda I_{n_x}\!+\!A^\top) Q_n\!+\!\nu Q_n(\Delta_n^{\top})^2 \in\mathbb{R}^{n_x\times n},\\
    \Psi_{zz} &= \Delta_n\Phi_n(\theta_o)B^\top Q_n + T_n(\lambda I_{n} \!+\! \nu (\Delta_n^{\top})^2) \in\mathbb{R}^{n\times n}.
\end{aligned}
\end{equation}

Moreover, considering $(x,z)\in D_n$, the expression of $\dot{\mathcal{V}}$ will be restricted to $\mathcal{D}$ in~\eqref{eq:domains}. In other words, the vector $\begin{bsmallmatrix}x\\\int_0^{\theta_i} \Phi_n(\tau)z(\tau)\mathrm{d}\tau\end{bsmallmatrix}$ need to satisfy 
\begin{equation}\label{eq:constraint}
\begin{bsmallmatrix}C&-\Phi_n^\top(0)\\0&-\Phi_n^\top(\theta_i)\end{bsmallmatrix}\begin{bsmallmatrix}x\\\int_0^{\theta_i} \Phi_n(\tau)z(\tau)\mathrm{d}\tau\end{bsmallmatrix}=0.
\end{equation}
This constraint~\eqref{eq:constraint} can be seen as a projection of the finite-dimensional state $\xi_n=\begin{bsmallmatrix}x\\\int_0^{\theta_i} \Phi_n(\tau)z(\tau)\mathrm{d}\tau\end{bsmallmatrix}$ on the matrix kernel
\begin{equation}
    \Pi_n = \mathrm{ker}\begin{bsmallmatrix}C&-\Phi_n^\top(0)\\0&-\Phi_n^\top(\theta_i)\end{bsmallmatrix}.
\end{equation}

Matrices $\Psi_n^{+}$ and $\Psi_n^{-}$ are expressed with a finite number of parameters via the triplet of matrices $(P,Q_n,T_n)$. We can then deduce an instability criterion in the form of a semidefinite programming test based on the feasibility of two affine matrix inequalities.

\begin{thm}\label{thm2}
For a given $n\in\mathbb{N}$, if there is no triplet of matrices $(P,Q_n,T_n)\in\mathbb{S}^{n_x}_+\times\mathbb{R}^{n_x\times n}\times\mathbb{S}^{n}_+$ such that $\Psi_n^+\succ 0$ and $\Pi_n^\top\Psi_n^- \Pi_n\prec 0$, then system~\eqref{eq:system} is unstable.
\end{thm}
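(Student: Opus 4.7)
The plan is to prove the statement by contraposition: I will show that if system~\eqref{eq:system} is globally asymptotically stable, then the triplet $(P,Q_n,T_n)\in\mathbb{S}^{n_x}_+\times\mathbb{R}^{n_x\times n}\times\mathbb{S}^{n}_+$ read off the converse Lyapunov functional satisfies both $\Psi_n^+\succ 0$ and $\Pi_n^\top\Psi_n^-\Pi_n\prec 0$. The candidate triplet is supplied by the necessity part of Theorem~\ref{thm:lyap}, the role of the finite-dimensional parameters $(P,Q_n,T_n)$ being obtained by projecting $\mathcal{V}$ and $\dot{\mathcal{V}}$ onto the subspaces $H_n$ and $D_n$.

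Assuming stability, Theorem~\ref{thm:lyap} yields scalars $\gamma_1,\gamma_2>0$ and a functional $\mathcal{V}$ of the form~\eqref{eq:lyap} with $P\in\mathbb{S}^{n_x}_+$ and kernels $Q,T$ verifying~\eqref{eq:ineq1}--\eqref{eq:ineq2}; from these kernels I define $Q_n$ and $T_n$ by the integral formulas in~\eqref{eq:psin1}. I then restrict $\mathcal{V}$ to the finite-dimensional subspace $H_n$: any $\begin{bsmallmatrix}x\\z\end{bsmallmatrix}\in H_n$ writes $z=\Phi_n^\top\xi_z$ with $\xi_z=\int_0^{\theta_i}\Phi_n(\tau)z(\tau)\mathrm{d}\tau\in\mathbb{R}^n$, and setting $\xi=\begin{bsmallmatrix}x\\\xi_z\end{bsmallmatrix}$, the orthonormality property (P1) yields both $\mathcal{V}(\begin{bsmallmatrix}x\\z\end{bsmallmatrix})=\xi^\top\Psi_n^+\xi$ and the Parseval identity $\lVert\begin{bsmallmatrix}x\\z\end{bsmallmatrix}\rVert^2=|\xi|^2$. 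Since $\xi$ ranges over the whole of $\mathbb{R}^{n_x+n}$ as $(x,z)$ varies in $H_n$, inequality~\eqref{eq:ineq1} propagates to $\xi^\top\Psi_n^+\xi\ge\gamma_1|\xi|^2$ for every $\xi$, hence $\Psi_n^+\succ 0$; in particular its diagonal blocks $P$ and $T_n$ are positive definite as required by the triplet definition.

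For the derivative inequality, I substitute $z=\Phi_n^\top\xi_z$ together with $\partial_\theta z=\Phi_n^\top\Delta_n^\top\xi_z$ and $\partial_{\theta\theta}z=\Phi_n^\top(\Delta_n^\top)^2\xi_z$---using the closure property (P2)---into the expression~\eqref{eq:lyapder} of $\dot{\mathcal{V}}$. Collecting the resulting quadratic terms in $\xi$ and symmetrizing via the $\mathrm{He}(\cdot)$ operator gives $\dot{\mathcal{V}}(\begin{bsmallmatrix}x\\z\end{bsmallmatrix})=\xi^\top\Psi_n^-\xi$ with the blocks as in~\eqref{eq:psin2}. Restricting further to $D_n=H_n\cap\mathcal{D}$, the boundary constraints in~\eqref{eq:domains} specialize exactly to~\eqref{eq:constraint}, so $\xi$ must lie in the range of a matrix $\Pi_n$ whose columns form a basis of the indicated kernel. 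Writing $\xi=\Pi_n\zeta$, inequality~\eqref{eq:ineq2} gives $\zeta^\top\Pi_n^\top\Psi_n^-\Pi_n\zeta\le-\gamma_2|\Pi_n\zeta|^2$, and since $\Pi_n$ has full column rank, the right-hand side is bounded above by $-\gamma_2'|\zeta|^2$ for some $\gamma_2'>0$, yielding $\Pi_n^\top\Psi_n^-\Pi_n\prec 0$ and completing the contrapositive.

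The main technical obstacle is the algebraic identification $\dot{\mathcal{V}}|_{H_n}=\xi^\top\Psi_n^-\xi$: although routine, it requires careful bookkeeping to see how the cross term $\dot{x}^\top Q_n\xi_z$ splits---once $\dot{x}=Ax+B\partial_\theta z(\theta_o)$ is expanded---between the $xz$ and $zz$ blocks, so that the contribution $\Delta_n\Phi_n(\theta_o)B^\top Q_n$ lands inside $\Psi_{zz}$. A secondary but conceptually important point is that we never require $\Psi_n^-$ to be negative definite on all of $\mathbb{R}^{n_x+n}$, only on the subspace encoded by $\Pi_n$, which is exactly what the congruence $\Pi_n^\top\Psi_n^-\Pi_n$ captures; this is what allows the certificate to remain feasible despite the rigid algebraic structure of the unconstrained derivative matrix.
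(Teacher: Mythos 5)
Your proposal is correct and follows essentially the same route as the paper: contraposition via the necessity part of Theorem~\ref{thm:lyap}, restriction of $\mathcal{V}$ and $\dot{\mathcal{V}}$ to $H_n$ and $D_n$ using (P1)--(P2), and the identification of the boundary constraint with the kernel parametrized by $\Pi_n$. You supply a few details the paper leaves implicit (the Parseval identity on $H_n$, the full-column-rank argument turning $\leq -\gamma_2|\Pi_n\zeta|^2$ into strict negative definiteness of the congruence), but the argument is the same.
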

\begin{proof}
Assume that system~\eqref{eq:system} is globally asymptotically stable. Then, there exist scalars $\gamma_1,\gamma_2>0$ and a triplet of functions $(P,Q,T)$ such that the Lyapunov functional $\mathcal{V}$ given by~\eqref{eq:lyap} satisfy inequalities~\eqref{eq:ineq} that are recalled below
\begin{equation*}
    \begin{aligned}
        \gamma_1\left\lVert \begin{bsmallmatrix}x\\z\end{bsmallmatrix} \right\rVert^2&\leq\mathcal{V}(\begin{bsmallmatrix}x\\z\end{bsmallmatrix}),&&\forall \begin{bsmallmatrix}x\\z\end{bsmallmatrix}\in\mathcal{H},\\
        \dot{\mathcal{V}}(\begin{bsmallmatrix}x\\z\end{bsmallmatrix})&\leq-\gamma_2\left\lVert \begin{bsmallmatrix}x\\z\end{bsmallmatrix} \right\rVert^2,&&\forall \begin{bsmallmatrix}x\\z\end{bsmallmatrix}\in\mathcal{D}.
    \end{aligned}
\end{equation*}
In particular, these inequalities are also verified on the subsets $H_n\subset\mathcal{H}$ and $D_n\subset\mathcal{D}$, respectively. 
It gives then a triplet of matrices $(P,Q_n,T_n)$ such that
\begin{equation}
\begin{aligned}
    \gamma_1|\xi_n|^2&\leq\xi_n^\top\Psi_n^-\xi_n,&& \forall \xi_n\in\mathbb{R}^{n_x+n},\\
    \xi_n^\top\Psi_n^-\xi_n &\leq -\gamma_2|\xi_n|^2,&& \forall \xi_n\in\left\{\begin{array}{l}\mathbb{R}^{n_x+n} \text{ such that } \\\begin{bsmallmatrix}C&-\Phi_n^\top(0)\\0&-\Phi_n^\top(\theta_i)\end{bsmallmatrix}\xi_n=0\end{array}\right\},
\end{aligned}
\end{equation}
which means that $\Psi_n^+\succ0$ and $\Pi_n^\top \Psi_n^- \Pi_n\prec 0$. By contraposition, the theorem's statement holds.
\end{proof}

\begin{rem}
Note that the condition in Theorem~\ref{thm:lyap} gives an outer estimate of the feasible set in the space of parameters for the integral inequalities~\eqref{eq:ineq}. It naturally leads to a sufficient instability condition. Following~\cite{Fantuzzi2017} and using the add-on \textit{QUINOPT} (QUadratic INtegral OPTimisation) on \textit{Yalmip}, it might be possible to also have an inner estimate of the integral inequalities~\eqref{eq:ineq}. 
\end{rem}

\subsection{Converse Lyapunov condition}

In this subsection, we exploit the solution of the Lyapunov equation $\mathcal{A}^\ast \mathcal{P} + \mathcal{P}\mathcal{A} = - \mathcal{W}$, for an arbitrary diagonal and positive operator $\mathcal{W}=\begin{bsmallmatrix}1&0\\0&w\end{bsmallmatrix}$. Indeed, for $\theta_o=\theta_i$, the particular triplet~$(P,Q,T)$ associated to such an converse operator $\mathcal{P}$ satisfies the following equations (see Appendix~\ref{app2} for calculation details)
\begin{subequations}\label{eq:PQT}
\begin{empheq}[left=\empheqlbrace]{align}
    &\mathrm{He}(PA+\nu Q^\prime(0)C) = -I_{n_x},\label{eq:PQT1}\\
    &\nu Q^{\prime\prime}(\theta) \!+\! (A^\top\!+\!\lambda I_{n_x})Q(\theta) \!+\!\nu C^\top \partial_{\theta_1}T(0,\theta) \!=\! 0,\label{eq:PQT2}\\
    &Q(0)=0, \quad PB+\nu Q(\theta_i)=0,\label{eq:PQT3}\\
    &(\nu\partial_{\theta_1\theta_1}+\nu\partial_{\theta_2\theta_2}+2\lambda) T(\theta_1,\theta_2) = 0 ,\label{eq:PQT4}\\
    &T(\theta,0) = 0,  \quad \nu T(\theta_i,\theta)+B^\top Q(\theta)=0,\label{eq:PQT5}\\
    &(\partial_{\theta_1}-\partial_{\theta_2})T(\theta,\theta) =-\frac{w}{2}.\label{eq:PQT6}
\end{empheq}
\end{subequations}

According to~\cite[Theorem~1]{Datko1970}, the statement below holds.
\begin{thm}\label{thm3}
For $w>0$, consider the Lyapunov functional~$\mathcal{V}$ of the form~\eqref{eq:lyap} where $(P,Q,T)$ satisfy~\eqref{eq:PQT}. System~\eqref{eq:system} is globally asymptotically stable if and only if there exists a scalar $\gamma>0$ such that for all $\begin{bsmallmatrix}x\\z\end{bsmallmatrix}\in\mathcal{H}$ the following inequality holds
\begin{equation}\label{eq:ineq3}
\gamma\left\lVert \begin{bsmallmatrix}x\\z\end{bsmallmatrix} \right\rVert^2 \leq \mathcal{V}(\begin{bsmallmatrix}x\\z\end{bsmallmatrix}).
\end{equation}
\end{thm}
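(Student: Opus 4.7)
The plan is to interpret the boundary-value problem~\eqref{eq:PQT} as the kernel form of the operator Lyapunov equation $\mathcal{A}^\ast \mathcal{P} + \mathcal{P} \mathcal{A} = -\mathcal{W}$ with $\mathcal{W}=\mathrm{diag}(I_{n_x},wI)$, and then to apply the converse Lyapunov theorem for exponentially stable semigroups on a Hilbert space. First I would verify, by expanding~\eqref{eq:lyapder} and integrating by parts twice in the spatial variable, that any triplet $(P,Q,T)$ solving~\eqref{eq:PQT} produces $\dot{\mathcal{V}}(\begin{bsmallmatrix}x\\z\end{bsmallmatrix}) = -|x|^2 - w \int_0^{\theta_i} |z(\theta)|^2\,\mathrm{d}\theta$ for every $\begin{bsmallmatrix}x\\z\end{bsmallmatrix} \in \mathcal{D}$. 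The boundary terms created by the integration by parts vanish thanks to~\eqref{eq:PQT3},~\eqref{eq:PQT5},~\eqref{eq:PQT6} combined with the domain constraints $z(0)=Cx$ and $z(\theta_i)=0$, while the interior terms cancel by~\eqref{eq:PQT1},~\eqref{eq:PQT2},~\eqref{eq:PQT4}; this calculation is essentially the content of Appendix~\ref{app2}.

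For the sufficient direction, assume the lower bound~\eqref{eq:ineq3} holds with some $\gamma>0$. Combined with the derivative identity just established, we obtain $\gamma \|\cdot\|^2 \leq \mathcal{V}(\cdot)$ and $\dot{\mathcal{V}}(\cdot) \leq -\min(1,w)\|\cdot\|^2$ on $\mathcal{D}$, which exactly match the hypotheses of Theorem~\ref{thm:lyap} with $\gamma_1=\gamma$ and $\gamma_2=\min(1,w)$, hence global asymptotic stability.

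For the necessary direction, assume the system is globally asymptotically stable. Because $\mathcal{A}$ generates a holomorphic semigroup with compact resolvent (Lemma~\ref{lem:riesz}) and $\mathrm{Re}(s_k)\to-\infty$ along its point spectrum, asymptotic stability of $\mathcal{T}(t)$ is equivalent to uniform exponential stability. I would then introduce the converse operator $\mathcal{P}_{\mathrm{conv}}\begin{bsmallmatrix}x_0\\z_0\end{bsmallmatrix}:=\int_0^\infty \mathcal{T}^\ast(t)\mathcal{W}\mathcal{T}(t)\begin{bsmallmatrix}x_0\\z_0\end{bsmallmatrix}\,\mathrm{d}t$, which is bounded, self-adjoint, positive, and satisfies $\mathcal{A}^\ast \mathcal{P}_{\mathrm{conv}} + \mathcal{P}_{\mathrm{conv}} \mathcal{A} = -\mathcal{W}$. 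By~\cite[Theorem~1]{Datko1970} this operator is coercive, that is $\gamma \|\cdot\|^2 \leq \scal{\cdot}{\mathcal{P}_{\mathrm{conv}}\cdot}$ for some $\gamma>0$. Reading off the integral kernel of $\mathcal{P}_{\mathrm{conv}}$ on $\mathbb{R}^{n_x}\times L^2(0,\theta_i)$ produces a triplet satisfying~\eqref{eq:PQT}, and by uniqueness of that boundary-value problem this triplet coincides with the one fixed in the statement, so $\mathcal{V}$ inherits the required lower bound~\eqref{eq:ineq3}.

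I expect the main obstacle to lie in the necessity direction, through two technical points. The first is the uniqueness of $(P,Q,T)$ solving~\eqref{eq:PQT}: this is a coupled boundary-value problem combining an elliptic equation for the symmetric kernel $T$ on $[0,\theta_i]^2$ with Dirichlet data on all four edges, a second-order ODE for $Q$ on $[0,\theta_i]$, and algebraic equations for $P$. Uniqueness should follow from a standard elliptic energy estimate on the homogeneous problem for $T$, propagated to $Q$ via~\eqref{eq:PQT2},~\eqref{eq:PQT3} and then to $P$ via~\eqref{eq:PQT1}, analogous to the Lyapunov-matrix uniqueness for time-delay systems~\cite{Kharitonov2006}. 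The second is rigorously identifying $\mathcal{P}_{\mathrm{conv}}$ as an integral operator whose kernel matches the form~\eqref{eq:lyap}; the smoothing action of the holomorphic semigroup generated by $\mathcal{A}$ supplies the regularity of $(Q,T)$ needed to match the interior PDEs and boundary conditions of~\eqref{eq:PQT}.
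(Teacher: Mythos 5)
Your route is the same one the paper takes: its proof of this theorem is a two-line pointer to the proof of Theorem~\ref{thm:lyap} and to Datko's papers, and that proof in turn builds the converse operator $\mathcal{P}=\int_0^\infty\mathcal{T}^\ast(t)\mathcal{W}\mathcal{T}(t)\,\mathrm{d}t$, identifies $\mathcal{V}=\scal{\cdot}{\mathcal{P}\cdot}$ with the form~\eqref{eq:lyap}, and invokes the derivative identity of Appendix~\ref{app2}. Your sufficiency direction (combine~\eqref{eq:ineq3} with $\dot{\mathcal{V}}=-|x|^2-w\int_0^{\theta_i}|z|^2$ and apply Theorem~\ref{thm:lyap}) is exactly what is intended, and the two obstacles you flag --- uniqueness of the solution of~\eqref{eq:PQT} and the identification of the integral kernel of $\mathcal{P}_{\mathrm{conv}}$ with a triplet $(P,Q,T)$ of the required regularity --- are genuine and are not addressed by the paper either.

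There is, however, one step that fails as written, in the necessity direction: the claim that $\mathcal{P}_{\mathrm{conv}}$ is \emph{coercive}. Datko's theorem delivers boundedness, self-adjointness and strict positivity of $\mathcal{P}_{\mathrm{conv}}$, not a uniform lower bound $\gamma\mathcal{I}$. For this parabolic system coercivity is in fact false: in the modal expansion~\eqref{eq:Pconv2} the diagonal coefficients are $\mathcal{I}_{k,k}=-1/(2\mathrm{Re}(s_k))$ for $k\geq n$, and since $\mathrm{Re}(s_k)\sim-\nu(k\pi/\theta_i)^2\to-\infty$, the quadratic form $\scal{\mathcal{F}_k}{\mathcal{P}_{\mathrm{conv}}\mathcal{F}_k}/\lVert\mathcal{F}_k\rVert^2$ tends to zero, so no $\gamma>0$ satisfies~\eqref{eq:ineq3}. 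More structurally, the $z$-block of any functional of the form~\eqref{eq:lyap} with $T\in L^2((0,\theta_i)\times(0,\theta_i))$ is a Hilbert--Schmidt, hence compact, quadratic form on $L^2(0,\theta_i)$, whose infimum over the unit sphere is necessarily zero; so \emph{no} functional of that form can satisfy~\eqref{eq:ineq3} (nor~\eqref{eq:ineq1}) with $\gamma>0$. This defect is inherited from the paper, whose proof of Theorem~\ref{thm:lyap} asserts "the positivity of $\mathcal{P}$ and the validity of~\eqref{eq:ineq1}" without producing the constant. The statement can be repaired --- and this is all that Corollary~\ref{thm:conv} actually uses --- by replacing the coercive bound with strict positivity $\mathcal{V}(\begin{bsmallmatrix}x\\z\end{bsmallmatrix})>0$ for all nonzero $\begin{bsmallmatrix}x\\z\end{bsmallmatrix}\in\mathcal{H}$ (which does follow from $\mathcal{T}(t)\begin{bsmallmatrix}x\\z\end{bsmallmatrix}\neq0$ for small $t$), or by asking for coercivity only in a weaker norm; your proof should make that substitution explicit rather than cite Datko for a bound he does not provide.
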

\begin{proof}
    It parallels the proof of Theorem~\ref{thm:lyap}. For more details, one can refer to~\cite{Datko1968,Datko1970}.
\end{proof}

Since inequality~\eqref{eq:ineq3} cannot be tested numerically, we propose an alternative as a sufficient condition of instability.

\begin{cor}\label{thm:conv}
For a given $n$ in $\mathbb{N}$, define matrix $\Psi_n^+$ by~\eqref{eq:psin1} where $(P,Q,T)$ satisfy~\eqref{eq:PQT} with $w>0$.
If $\Psi_n^+$ is not positive definite, then system~\eqref{eq:system} is unstable.
\end{cor}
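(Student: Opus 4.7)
The strategy is a contraposition argument that mirrors the structure of the proof of Theorem~\ref{thm2}, but now starting from the converse Lyapunov statement of Theorem~\ref{thm3} instead of the direct one. Assume that system~\eqref{eq:system} is globally asymptotically stable. Then Theorem~\ref{thm3} provides a scalar $\gamma>0$ such that the functional $\mathcal{V}$ defined by~\eqref{eq:lyap} with the specific triplet $(P,Q,T)$ solving~\eqref{eq:PQT} satisfies $\gamma\|\cdot\|^2 \leq \mathcal{V}(\cdot)$ on the whole state space $\mathcal{H}$. The aim is to derive $\Psi_n^+ \succ 0$ from this infinite-dimensional inequality.

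The next step is to restrict the inequality to the finite-dimensional subspace $H_n = \mathrm{span}(\mathcal{S}_n)$. For any $\begin{bsmallmatrix}x\\z\end{bsmallmatrix}\in H_n$, the orthonormality property (P1) lets one write $z(\theta)=\Phi_n^\top(\theta)\int_0^{\theta_i}\Phi_n(\tau)z(\tau)\mathrm{d}\tau$, so that the same computation leading to~\eqref{eq:psin1} in the direct-Lyapunov subsection yields
\begin{equation*}
\mathcal{V}\!\left(\begin{bsmallmatrix}x\\z\end{bsmallmatrix}\right) = \xi_n^\top \Psi_n^+ \xi_n, \qquad \xi_n := \begin{bsmallmatrix}x\\\int_0^{\theta_i}\Phi_n(\tau)z(\tau)\mathrm{d}\tau\end{bsmallmatrix},
\end{equation*}
where $(Q_n,T_n)$ are obtained from the converse $(Q,T)$ by the integrals in~\eqref{eq:psin1}. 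By the same orthonormality, Parseval's identity gives $\|\begin{bsmallmatrix}x\\z\end{bsmallmatrix}\|^2 = |\xi_n|^2$ on $H_n$.

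Finally, observe that the map $\begin{bsmallmatrix}x\\z\end{bsmallmatrix}\mapsto \xi_n$ is a bijection from $H_n$ to $\mathbb{R}^{n_x+n}$, since any vector of coefficients reconstructs an element of $H_n$ and conversely these coefficients are recovered by integration against $\Phi_n$. Therefore the inequality~\eqref{eq:ineq3} restricted to $H_n$ rewrites as $\gamma |\xi_n|^2 \leq \xi_n^\top \Psi_n^+ \xi_n$ for every $\xi_n \in \mathbb{R}^{n_x+n}$, which is exactly $\Psi_n^+ \succ 0$. Contraposition then gives the claim: if $\Psi_n^+$ fails to be positive definite, no such converse Lyapunov functional can exist, and by Theorem~\ref{thm3} the system~\eqref{eq:system} cannot be globally asymptotically stable.

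I do not expect real obstacles: the proof is essentially a projection of the converse Lyapunov inequality onto $H_n$, already prepared by the algebra carried out for~\eqref{eq:psin1}. The only subtlety worth mentioning explicitly is that, unlike in Theorem~\ref{thm2}, no boundary-condition constraint~\eqref{eq:constraint} enters here, since~\eqref{eq:ineq3} is tested on all of $\mathcal{H}$ (not only on $\mathcal{D}$), so no kernel projection $\Pi_n$ is needed on the matrix $\Psi_n^+$.
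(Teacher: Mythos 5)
Your proposal is correct and is essentially the paper's argument run in the contrapositive direction: the paper assumes $\Psi_n^+\not\succ 0$, builds the witness $\begin{bsmallmatrix}X^+\\\Phi_n^\top\zeta_n^+\end{bsmallmatrix}\in H_n\setminus\{0\}$ with $\mathcal{V}\leq 0$, and invokes Theorem~\ref{thm3}, while you assume stability and project the coercivity inequality~\eqref{eq:ineq3} onto $H_n$ to conclude $\Psi_n^+\succ 0$. The underlying ingredients (orthonormality of $\Phi_n$, the identification of $\mathcal{V}|_{H_n}$ with the quadratic form $\Psi_n^+$, Theorem~\ref{thm3}, and the observation that no kernel projection $\Pi_n$ is needed since~\eqref{eq:ineq3} is posed on all of $\mathcal{H}$) are the same.
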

\begin{proof}
Assume that $\Psi_n^+$ is not positive definite. Consequently, there exist a state $\xi_n=\begin{bsmallmatrix}X^+\\\zeta_n^+\end{bsmallmatrix}\in\mathbb{R}^{n_x+n}\backslash\{0\}$ such that $\xi_n^\top\Psi_n^+\xi_n\leq 0$. Then, by orthogonality of the functions $\Phi_n$, we have
\begin{equation*}
    \mathcal{V}(\begin{bsmallmatrix}x\\z\end{bsmallmatrix}) \leq 0, \text{ for } \begin{bsmallmatrix}x\\z\end{bsmallmatrix}:=\begin{bsmallmatrix}X^+\\\Phi_n^\top\zeta_n^+\end{bsmallmatrix}\in\mathcal{H}\backslash\{0\}.
\end{equation*}
The previous inequality shows that the converse Lyapunov functional is negative for a particular state in $H_n\subset \mathcal{H}$. By application of Theorem~\ref{thm3}, we conclude that system~\eqref{eq:system} is unstable.
\end{proof}

For the case $n=0$, the following corollary holds.
\begin{cor}\label{cor:convP}
    If the matrix $P$ solution of~\eqref{eq:PQT} with $w>0$ is not positive definite, then system~\eqref{eq:system} is unstable.
\end{cor}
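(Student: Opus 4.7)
The plan is to recognize Corollary~\ref{cor:convP} as the $n=0$ specialization of the preceding Corollary~\ref{thm:conv}. First I would observe that, by construction, the sequence $\mathcal{S}_0 = \{\mathbf{e}_1^{n_x},\dots,\mathbf{e}_{n_x}^{n_x}\}$ contains no functional mode, so $\Phi_0$ is empty, the blocks $Q_0 \in \mathbb{R}^{n_x \times 0}$ and $T_0 \in \mathbb{R}^{0 \times 0}$ in~\eqref{eq:psin1} have trivial dimensions, and the full matrix $\Psi_n^+$ collapses to its top-left corner, $\Psi_0^+ = P$. Under this identification the hypothesis of Corollary~\ref{cor:convP} is literally the hypothesis of Corollary~\ref{thm:conv} with $n=0$, so the instability conclusion is immediate.

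For transparency I would also sketch the underlying mechanism without routing through Corollary~\ref{thm:conv}: given $x_0 \in \mathbb{R}^{n_x}\setminus\{0\}$ with $x_0^\top P x_0 \leq 0$, evaluate the converse Lyapunov functional~\eqref{eq:lyap} at the particular state $\begin{bsmallmatrix} x_0 \\ 0 \end{bsmallmatrix}\in\mathcal{H}$. The cross term in $Q$ and the double-integral term in $T$ both vanish since the PDE component is identically zero, leaving $\mathcal{V}\bigl(\begin{bsmallmatrix} x_0 \\ 0 \end{bsmallmatrix}\bigr) = x_0^\top P x_0 \leq 0$. This directly contradicts the coercivity~\eqref{eq:ineq3} required by Theorem~\ref{thm3} for global asymptotic stability, so the system must be unstable.

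I do not foresee any genuine obstacle; the corollary is an immediate specialization. The only minor point worth making explicit is the empty-matrix convention ensuring $\Psi_0^+ = P$, after which the proof reduces to a single line. The real interest of stating the result separately is practical: it isolates a purely finite-dimensional, scalar-level check on the matrix $P$ obtained from~\eqref{eq:PQT}, providing an instability certificate that requires no mode truncation on the functional basis $\Phi_n$.
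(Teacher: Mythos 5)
Your proposal is correct and matches the paper, which states Corollary~\ref{cor:convP} precisely as the $n=0$ instance of Corollary~\ref{thm:conv} without further proof; your identification $\Psi_0^+=P$ and the direct evaluation of $\mathcal{V}$ at $\begin{bsmallmatrix}x_0\\0\end{bsmallmatrix}$ to contradict Theorem~\ref{thm3} reproduce exactly the mechanism used in the proof of Corollary~\ref{thm:conv}. Nothing further is needed.
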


\begin{rem}
Note that Corollaries~\ref{thm:conv} and \ref{cor:convP} extend Theorem~\ref{thm2} and can be used when the Lyapunov converse function is known. Obtaining such a function is far from trivial in the general case.
\end{rem}

\section{Numerical results}

\subsection{A scalar example}

The example below allows a complete and simple parametric study by considering $n_x=1$.
\begin{exmp}\label{exmp:1d}
    Consider system~\eqref{eq:system} with $A=a\in\mathbb{R}$, $B=b\in\mathbb{R}$, $C=1$, $\theta_i=\theta_o$, $\nu\in\mathbb{R}$ and $\lambda\in\mathbb{R}$.
\end{exmp}
The influence of the PDE parameter $\lambda$ and the ODE parameter $a$, which are known to rule the stability of both equations separately, is investigated in the sequel.

\subsubsection{Spectral analysis}

The characteristic equation~\eqref{eq:charac} is given by
\begin{equation}
    s-a+\frac{b}{\theta_i\mathrm{sinhc}(\sqrt{\frac{s-\lambda}{\nu}}\theta_i)} = 0.
\end{equation}

Considering real solutions $s=\mathrm{Re}(s)$, we obtain a sufficient condition of 
instability. For any $\frac{\lambda}{\nu}<(\frac{\pi}{\theta_i})^2$, the system is unstable if the coefficient $a$ satisfies
\begin{equation}\label{eq:condition}
    a > \frac{b}{\theta_i\mathrm{sinhc}(\sqrt{\frac{-\lambda}{\nu}}\theta_i)}.
\end{equation}

This condition means that there is no real positive intersection between $f_1(s)=-\mathrm{Re}(s)+a$ and $f_2(s)=\frac{b}{\theta_i\mathrm{sinhc}(\sqrt{\frac{\mathrm{Re}(s)-\lambda}{\nu}}\theta_i)}$. It is illustrated in Fig.~\ref{fig:spectral}, where the functions $f_1$ and $f_2$ are plotted in blue and magenta colors.
For instance, when $\lambda=0$, we find the instability condition $a\geq \frac{b}{\theta_i}$. For $b=0$ (without interconnection), both systems have to be stable separately.

\begin{rem}
Note that the limitation to $\frac{\lambda}{\nu}<(\frac{\pi}{\theta_i})^2$ is due to the fact we only consider the first branch of the above function $f_2$ in the spectral analysis.
\end{rem}

\begin{rem}
When $b<b_{max}$ with
\begin{equation}
\begin{aligned}
    \frac{1}{b_{max}} &= -\frac{\mathrm{d}}{\mathrm{d}s}\frac{1}{\theta_i\mathrm{sinhc}(\sqrt{\frac{s-\lambda}{\nu}}\theta_i)}(0),\\
    &=\frac{\theta_i}{2\nu}\frac{\cosh(\sqrt{\frac{-\lambda}{\nu}}\theta_i)-\mathrm{sinhc}(\sqrt{\frac{-\lambda}{\nu}}\theta_i)}{\mathrm{sinh}(\sqrt{\frac{-\lambda}{\nu}}\theta_i)^2} \underset{\lambda=0}{=} \frac{\theta_i}{6\nu}.
\end{aligned}
\end{equation}
the criterion~\eqref{eq:condition} is a necessary and sufficient stability condition applying Theorem~\ref{thm:spectral}. 
\end{rem} 

\begin{figure}
    \centering
    \includegraphics[width=7cm]{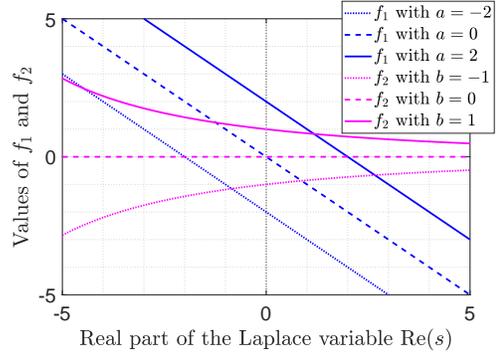}
    \caption{Spectral condition with $\lambda=0$, $\nu=\theta_i=1$.}
    \label{fig:spectral}
\end{figure}

\subsubsection{Direct Lyapunov condition}

Theorem~\ref{thm2} can be used with Legendre polynomials or Fourier trigonometric functions normalized on the interval $[0,\theta_i]$. Firstly, we remark that the use of trigonometric functions is much more restrictive than Legendre polynomials. Indeed, only trigonometric functions can be generated on $[0,\theta_i]$ with Fourier basis~\cite{Powell1981}. 
Secondly, the instability condition depends on the order $n$. When $n$ increases, the certified instability regions grow. On Fig.~\ref{fig:1d}, we applied Theorem~\ref{thm2} with respect to point-wise values of $a$ and $\lambda$. 
The unstable points are represented with red points, whose size shrinks with the order $n$. 
From the order $n=10$, we notice that there is no more improvement and that the estimate seems to converge to the unstable region colored in red.

\begin{figure}
    \centering
        \includegraphics[width=7cm]{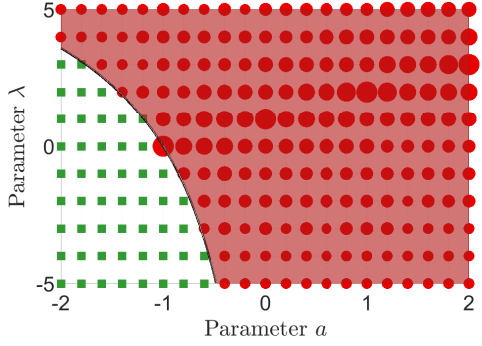}
    \caption{Lyapunov direct condition with $b=-1$, $\nu=\theta_i=1$.}
    \label{fig:1d}
\end{figure}

\subsubsection{Converse Lyapunov condition}

Consider the triplet~$(P,Q,T)$ as the solution of~\eqref{eq:PQT}, where $T$ is a piece-wise separable function~\cite{Stefanov2021}. This solution can be expressed as follows (see Appendix~\ref{app3} for calculation details)
\begin{equation}\label{eq:PQTsol1}
\begin{aligned}
    &P = \frac{\alpha\theta_i}{2} \mathrm{sinhc}(\sqrt{\frac{-\lambda}{\nu}}\theta_i),\;
    Q(\theta) = -\frac{\alpha\theta_i}{2} \frac{b}{\nu}\mathrm{sinhc}(\sqrt{\frac{-\lambda}{\nu}}\theta),\\
    &T(\theta_1,\theta_2)=\\
    &
    \left\{
    \begin{array}{l}
    \!\left(\!\cosh(\sqrt{\frac{-\lambda}{\nu}}\theta_1)+\beta\sinh(\sqrt{\frac{-\lambda}{\nu}}\theta_1)\!\right)\!\sinh(\sqrt{\frac{-\lambda}{\nu}}\theta_2) \frac{w}{2},\\
    \qquad \qquad \qquad  \qquad \qquad \qquad \qquad \text{for }0\leq\theta_2\leq\theta_1\leq 1,\\
    T(\theta_2,\theta_1),
    \qquad \qquad \qquad \qquad \quad\, \text{ otherwise},
    \end{array}
    \right.
\end{aligned}
\end{equation}
where 
\begin{equation}\label{eq:PQTsol2}
\begin{array}{l}
    \alpha = \left(b-a\theta_i\mathrm{sinhc}(\sqrt{\frac{-\lambda}{\nu}}\theta_i)\right)^{-1},\\
    \beta = \frac{a\cosh(\sqrt{\frac{-\lambda}{\nu}}\theta_i)}{b\sqrt{\frac{-\lambda}{\nu}}-a\sinh(\sqrt{\frac{-\lambda}{\nu}}\theta_i)},\\
    w = \frac{\alpha (\frac{b}{\nu})^2/\sqrt{\frac{-\lambda}{\nu}}}{\cosh(\sqrt{\frac{-\lambda}{\nu}}\theta_i)+\beta\sinh(\sqrt{\frac{-\lambda}{\nu}}\theta_i)}.
\end{array}
\end{equation}
It satisfies 
\begin{equation}
    \dot{\mathcal{V}}(\begin{bsmallmatrix}x\\z\end{bsmallmatrix})=-|x|^2- w\int_0^{\theta_i} |z(\theta)|^2\mathrm{d}\theta.
\end{equation}
For this particular case, the converse Lyapunov analysis reported by Corollary~\ref{cor:convP} can be implemented. For $w>0$, if $\alpha < 0$ then system~\eqref{eq:system} is unstable. It is equivalent to the spectral criterion~\eqref{eq:condition}.
The corresponding area is colored in red on Fig.~\ref{fig:1d}.


\subsubsection{Comparison}

A comparison between the three previous results is reported in Fig.~\ref{fig:1d}. From one side, the spectral condition~\eqref{eq:condition} and the Lyapunov converse condition certify that the red area is unstable and that the black line shows the boundary between stable and unstable sets. From the other side, the Lyapunov direct condition with Legendre polynomials at order $n=10$ provides red points when it is unstable and green squares when we cannot conclude.

For $b=-1$ and $\nu=\theta_i=1$, one can see that even if both the PDE and the ODE are stable, the interconnection might be unstable.
Intead, for $b=1$, the interconnection can stabilize unstable subsystems. 

Lastly, the spectral method and the converse Lyapunov method are restricted to simple cases and require the above analytical calculations (such as the computation of the rightmost characteristic root). The Lyapunov direct method is much more tractable and does not need pre-processing. It is also easily extendable to multi-dimensional reaction-diffusion PDE cases, other sets of system's parameters and other boundary conditions (Neumann or Robin types).

\subsection{Multivariable ODE example}

Consider a matrix case, where $A$ is Hurwitz and $\lambda>0$. It can be seen as a linear finite-dimensional controller at order $n_x=2$ of an unstable reaction-diffusion equation.
\begin{exmp}\label{exmp:md}
    Consider system~\eqref{eq:system} with $A=\begin{bsmallmatrix}0&1\\-4&-4\end{bsmallmatrix}$, $B=\begin{bsmallmatrix}0\\\theta_i\end{bsmallmatrix}$, $C=\begin{bsmallmatrix}1&0\end{bsmallmatrix}$, $\nu>0$, $\lambda>0$, $\theta_i>0$, $\theta_o=(1-\alpha)\theta_i$ and $\alpha\in(0,0.5)$.
\end{exmp}
We focus on the instability phenomena occurred when the sensor or actuator location ($\theta_i$ or $\theta_o$) varies.

\subsubsection{Spectral analysis}

The characteristic equation~\eqref{eq:charac} is given by
\begin{equation}
    (s+2)^2-\frac{\cosh(\sqrt{\frac{s-\lambda}{\nu}}\alpha\theta_i)}{\mathrm{sinhc}(\sqrt{\frac{s-\lambda}{\nu}}\theta_i)} = 0.
\end{equation}

For real solutions $s=\mathrm{Re}(s)$, we obtain a sufficient condition of 
instability. For any $\theta_i<(\frac{\nu}{\lambda})^2\pi$, Example~\ref{exmp:md} is unstable if the coefficient $\alpha$ satisfies
\begin{equation}\label{eq:condition_md}
    \alpha < \frac{1}{\sqrt{\frac{-\lambda}{\nu}}\theta_i}\cosh^{-1}\left(4\mathrm{sinhc}(\sqrt{\frac{-\lambda}{\nu}}\theta_i)\right).
\end{equation}

This condition means that there is no real positive intersection between $f_3(s)=-(\mathrm{Re}(s)+2)^2$ and $f_4(s)=-\frac{\mathrm{cosh}(\sqrt{\frac{\mathrm{Re}(s)-\lambda}{\nu}}\alpha\theta_i)}{\mathrm{sinhc}(\sqrt{\frac{\mathrm{Re}(s)-\lambda}{\nu}}\theta_i)}$. It is illustrated on Fig.~\ref{fig:spectral_md}, where the functions $f_3$ and $f_4$ are plotted in blue and magenta colors. For $\nu=\lambda=1$, $\alpha=0.3$ and $\theta_i=3$, the pole $s\simeq 0.2$ yields an unstable the closed-loop system in Example~\ref{exmp:1d}. On Fig.~\ref{fig:md}, we depict condition~\eqref{eq:condition_md} as a black line. We can certify that Example~\ref{exmp:md} is unstable for the parameters on the right of the black line (red area) and stable on the left.

\begin{figure}[!t]
    \centering
    \includegraphics[width=7cm]{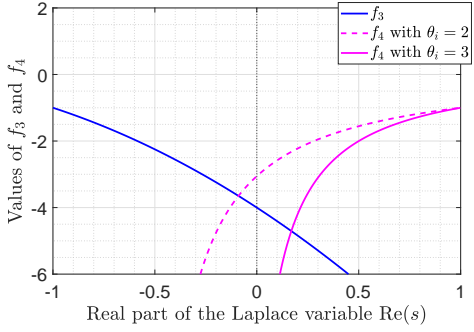}
    \caption{Example~\ref{exmp:md} with $\nu=\lambda=1$ and $\frac{\theta_o}{\theta_i}=0.7$ ($\alpha=0.3$).}
    \label{fig:spectral_md}
\end{figure}

\subsubsection{Direct Lyapunov condition}

The numerical condition $\Psi_n^+\succ 0$ and $\Pi_n^\top \Psi_n^-\Pi_n\prec 0$ in Theorem~\ref{thm2} is tested with Legendre polynomials at order $n=10$.
On Fig.~\ref{fig:md}, for point-wise values of $\theta_i$ and $\frac{\theta_o}{\theta_i}=1-\alpha$, a green square means that the condition succeeds, and a red point that it fails. Applying Theorem~\ref{thm2}, for red parameter values, the interconnected system in Example~\ref{exmp:md} is unstable. 

\begin{figure}[!t]
    \centering
    \includegraphics[width=7cm]{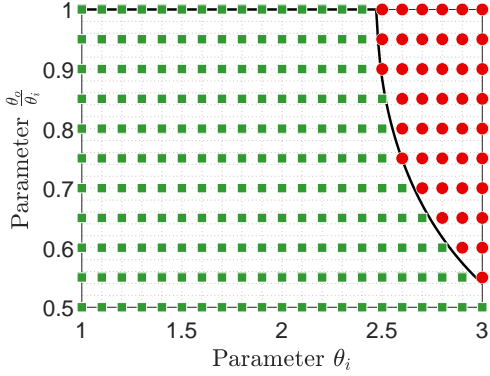}
    \caption{Example~\ref{exmp:md} with $\nu=\lambda=1$.}
    \label{fig:md}
\end{figure}

\subsubsection{Comparison}

Actually, for multivariable cases ($n_x>1$), the closed form of the converse Lyapunov functional satisfying~\eqref{eq:PQT} is unknown. Then, we only compare the spectral condition~\eqref{eq:condition_md} and the semidefinite programming condition given by Theorem~\ref{thm2}. Once again, as shown in Fig.~\ref{fig:md}, both results are similar. An unstable region is detected when $\theta_i$ or $\frac{\theta_o}{\theta_i}$ increase. It is also important to mention that the spectral condition enables to prove instability for a set of continuous parameters whereas the direct Lyapunov condition applies to a set of discrete parameters. 

\section{Conclusions}
This paper deals with a reaction-diffusion PDE coupled with an ODE. We developed instability tests to determine the role of several parameters on the behavior of the linear interconnected system. We propose three sufficient instability conditions: an eigenvalue test based on the spectral approach, a semidefinite programming test based on the direct Lyapunov approach, and a positive definiteness test based on the converse Lyapunov approach.

From a numerical point of view, we computed the sets of parameters for which the closed-loop system is unstable. We proved that the stability property of the interconnected system is independent of the stability of the PDE or ODE separately. 

Future work will extend the Lyapunov direct and converse methods to convex hulls of parameters and multi-dimensional PDE cases. Our theoretical results in terms of Riesz spectral decomposition could also lead to design controllability tests or control design strategies. 

\appendices

\section{Expression of some characteristic functions}\label{app1}

Assuming that $\begin{bsmallmatrix}x_k(t)\\z_k(t,\theta)\end{bsmallmatrix}=e^{s_kt}\begin{bsmallmatrix}X_k\\Z_k(\theta)\end{bsmallmatrix}$ is a non null solution of system~\eqref{eq:system} implies that 
\begin{equation}\label{eq:systemcharac}
    \left\{
    \begin{aligned}
        s_k X_k &= A X_k + B \partial_\theta Z_k(\theta_o),\\
        s_k Z_k(\theta) &= \nu Z_k^{\prime \prime}(\theta) + \lambda Z_k(\theta),\quad  \forall \theta\in(0,\theta_i),\\
        Z_k(0) &= CX_k, \quad Z_k(\theta_i) = 0.
    \end{aligned}
    \right.
\end{equation}
From the ODE part, the vector $X_k$ satisfies 
\begin{equation}
    \mathrm{det}(s_k I_{n_x}-A) X_k = \mathrm{adj}(s_k I_{n_x}-A)B \partial_\theta Z_k(\theta_o).
\end{equation}
From the PDE part, the function $Z_k$ satisfies
\begin{equation*}
    \sinh\left(\sqrt{\frac{s_k-\lambda}{\nu}}\theta_i\right)Z_k(\theta) = \sinh\left(\sqrt{\frac{s_k-\lambda}{\nu}}(\theta_i-\theta)\right)CX_k.
\end{equation*}
By derivation and evaluation at $\theta_o$, the coupling gives 
\begin{equation}
    \mathrm{det}(s_k I_{n_x}\!-\!A) -C\mathrm{adj}(s_k I_{n_x}\!-\!A)BH(s_k)=0 \Leftrightarrow \Delta(s_k) = 0.
\end{equation}
Then, for any $u=\frac{C\mathrm{adj}(s_kI_{n_x}-A)B}{\mathrm{det}(s_kI_{n_x}-A)\sinh\left(\sqrt{\frac{s_k-\lambda}{\nu}}\theta_i\right)}\partial_\theta Z_k(\theta_o)$ in $\mathbb{C}$ and for $s_k$ solution of $\Delta(s_k)=0$, we obtain that 
\begin{equation}
    \begin{bsmallmatrix}X_k\\Z_k(\theta)\end{bsmallmatrix} = \begin{bsmallmatrix} \mathrm{adj}(s_kI_{n_x}-A)B \frac{\sinh\left(\sqrt{\frac{s_k-\lambda}{\nu}}\theta_i\right)}{C\mathrm{adj}(s_kI_{n_x}-A)B} \\ \sinh\left(\sqrt{\frac{s_k-\lambda}{\nu}}(\theta_i-\theta)\right)\end{bsmallmatrix} u,
\end{equation}
solve~\eqref{eq:systemcharac}. To conclude, $F_k:=\begin{bsmallmatrix}X_k\\Z_k(\theta)\end{bsmallmatrix}\mathbf{i}$ is the normalized characteristic function of $\mathcal{A}$ in $\mathcal{D}$ associated to the characteristic root $s_k$ solution of~\eqref{eq:charac}.

\section{Converse Lyapunov functional}

\subsection{Kernels equation}\label{app2}

Consider $(P,Q,T)$ in $\mathbb{S}^{n_x}_+\times L^2(0,\theta_i)^{n_x}\times L^2((0,\theta_i)\times(0,\theta_i))$ such that $T(\theta_1,\theta_2)=T(\theta_2,\theta_1)$ and that the Lyapunov functional $\mathcal{V}$ in~\eqref{eq:lyap} satisfies along the trajectories of system~\eqref{eq:system}
\begin{equation}
    \frac{1}{2}\dot{\mathcal{V}}(x,z) = -\frac{1}{2}|x|^2 - \frac{w}{2} \int_0^{\theta_i} |z(\theta)|^2\mathrm{d}\theta.
\end{equation}
Applying integration by parts to the expression of $\dot{\mathcal{V}}$ in~\eqref{eq:lyapder} leads to
\begin{equation*}
    \begin{aligned}
    & x^\top\!PAx \!+\! x^\top PB \partial_\theta z(\theta_i) \!+\! \partial_{\theta}z^\top\!(\theta_i)\!\!\int_0^{\theta_i}\!\!\! B^\top Q(\theta) z(\theta) \mathrm{d}\theta\\
    & \!+\! x^\top\! \int_0^{\theta_i}\!\!\! (A^\top\!+\!\nu\partial_{\theta\theta}\!+\!\lambda I_{n_x}) Q(\theta)z(\theta)\mathrm{d}\theta \!+\! \nu x^\top\!\left[Q(\theta)\partial_\theta z(\theta)\right]_0^{\theta_i}\\
    & \!-\! \nu x^\top\!\left[\partial_\theta Q(\theta)z(\theta)\right]_0^{\theta_i} 
    \!+\! \nu \int_0^{\theta_i} \!\!\! z^\top\!(\theta)\left[T(\theta,\tau)\partial_{\tau}z(\tau)\right]_0^{\theta_i}\mathrm{d}\theta \\
    & \!-\! \nu \int_0^{\theta_i} \!\!\! z^\top\!(\theta)\left(\left[\partial_{\tau}T(\theta,\tau)z(\tau)\right]_0^{\theta}\!+\!\left[\partial_{\tau}T(\theta,\tau)z(\tau)\right]_{\theta}^{\theta_i}\right)\mathrm{d}\theta \\
    & \!+\! \int_0^{\theta_i}\!\!\!\int_0^{\theta_i}\!\!\!z^\top(\theta_1)(\nu\partial_{\theta_2\theta_2}+\lambda)T(\theta_1,\theta_2)z(\theta_2)\mathrm{d}\theta_1\mathrm{d}\theta_2\\
    &= -\frac{1}{2}|x|^2 - \frac{w}{2} \int_0^{\theta_i} |z(\theta)|^2\mathrm{d}\theta.
    \end{aligned}
\end{equation*}
Boundary conditions $z(0)=Cx$ and $z(\theta_i)=0$ allow to simplify in
\begin{equation*}
    \begin{aligned}
    & x^\top\!(PA \!+\! \nu Q^\prime(0)C \!+\! \frac{1}{2})x \!+\! x^\top (PB\!+\!\nu Q(\theta_i)) \partial_\theta z(\theta_i)\\
    & \!+\! x^\top\! \int_0^{\theta_i}\!\!\! \left(\!(A^\top\!+\!\nu\partial_{\theta\theta}\!+\!\lambda I_{n_x}) Q(\theta) \!+\!  \nu C^\top \!\!\underset{\tau\to 0}{\mathrm{lim}}\partial_\tau T^\top(\theta,\tau) \!\right)\!z(\theta) \mathrm{d}\theta\\
    & \!+\! \partial_{\theta}z^\top\!(\theta_i) \!\int_0^{\theta_i} \!\!\! (BQ(\theta)  \!+\! \nu T^\top\!(\theta,\theta_i))  z(\theta) \mathrm{d}\theta \\
    & \!-\! \nu x^\top Q(0)\partial_\theta z(0) \!-\! \nu\!\int_0^{\theta_i} \!\!\! z^\top (\theta) T(\theta,0) \partial_{\theta}z(0)   \mathrm{d}\theta \\
    & \!-\! \nu \! \int_0^{\theta_i} \!\!\! z^\top\!(\theta)\left(\underset{\tau\to\theta^-}{\mathrm{lim}}\partial_{\tau}T(\theta,\tau)\!-\!\underset{\tau\to\theta^+}{\mathrm{lim}}\partial_{\tau}T(\theta,\tau) \!+\! \frac{w}{2}\right)z(\theta)\mathrm{d}\theta \\
    & \!+\! \int_0^{\theta_i}\!\!\!\int_0^{\theta_i}\!\!\!z^\top(\theta_1)(\nu\partial_{\theta_2\theta_2}+\lambda)T(\theta_1,\theta_2)z(\theta_2)\mathrm{d}\theta_1\mathrm{d}\theta_2 = 0.
    \end{aligned}
\end{equation*}
Symmetric properties satisfied by the matrix~$P$ and the function~$T$ allow us to conclude that the constraints in~\eqref{eq:PQT} need to be satisfied.

\subsection{Kernels solution}\label{app3}

Assuming that function $T$ is a separable function on the triangle $\{(\theta_1,\theta_2)\in[0,1]^2\,|\,\theta_1\geq \theta_1\}$, the PDE part~\eqref{eq:PQT4} leads to
\begin{equation}
\begin{aligned}
    T(\theta_1,\theta_2)\!=\!&\left(\!\beta_1\cosh(\sqrt{\frac{-\lambda}{\nu}}\theta_1)+\beta_2\sinh(\sqrt{\frac{-\lambda}{\nu}}\theta_1)\!\right)\!\\
    &\quad\left(\!\beta_3\cosh(\sqrt{\frac{-\lambda}{\nu}}\theta_2)+\beta_4\sinh(\sqrt{\frac{-\lambda}{\nu}}\theta_2)\!\right),
\end{aligned}
\end{equation}
where $\beta_1$, $\beta_2$, $\beta_3$ and $\beta_4$ are some scalars to be fixed. Normalizing $\beta_1~=~1$, the boundary conditions~\eqref{eq:PQT3},\eqref{eq:PQT5},\eqref{eq:PQT6} give
\begin{equation}
\begin{aligned}
    P &= (\frac{\nu}{b})^2 T(\theta_i,\theta_i),&& Q(\theta) = -\frac{\nu}{b}T(\theta_i,\theta),\\
    \beta_3 &= 0,&& \beta_4 = \frac{w}{2}.
\end{aligned}
\end{equation}
Moreover, the ODE part~\eqref{eq:PQT2} allows to fix 
\begin{equation}
    \beta_2 = \frac{a\cosh(\sqrt{\frac{-\lambda}{\nu}}\theta_i)}{b\sqrt{\frac{-\lambda}{\nu}}-a\sinh(\sqrt{\frac{-\lambda}{\nu}}\theta_i)}.
\end{equation}
Then, the last constraint~\eqref{eq:PQT1} imposes
\begin{equation}
    aT(\theta_i,\theta_i) -b\partial_{\theta_2}T(\theta_i,0) = -\frac{(\frac{b}{\nu})^2}{2},
\end{equation}
which means that
\begin{equation}
    w = \frac{(\frac{b}{\nu})^2/\sqrt{\frac{-\lambda}{\nu}}}{\!\left(\!\cosh(\sqrt{\frac{-\lambda}{\nu}}\theta_i)\!+\!\beta\sinh(\sqrt{\frac{-\lambda}{\nu}}\theta_i)\!\right)\!\left(\!b-a\theta_i\mathrm{sinhc}(\sqrt{\frac{-\lambda}{\nu}}\theta_i)\!\right)\!}.
\end{equation}
Lastly, denoting 
\begin{equation}
    \alpha= \frac{\cosh(\sqrt{\frac{-\lambda}{\nu}}\theta_i)+\beta\sinh(\sqrt{\frac{-\lambda}{\nu}}\theta_i)}{(\frac{b}{\nu})^2/\sqrt{\frac{-\lambda}{\nu}}}w,\qquad \beta=\beta_2,
\end{equation}
we recover the solution provided in~\eqref{eq:PQTsol1}-\eqref{eq:PQTsol2}.

\bibliographystyle{plain}
\bibliography{autosam} 

\begin{thebibliography}{10}

\bibitem{Auriol2019}
J.~Auriol, K.A. Morris, and F.~Di~Meglio.
\newblock Late-lumping backstepping control of partial differential equations.
\newblock {\em Automatica}, 100:247--259, 2019.

\bibitem{Bajodek2022}
M.~Bajodek, A.~Seuret, and F.~Gouaisbaut.
\newblock Stability analysis of an ordinary differential equation
  interconnected with the reaction–diffusion equation.
\newblock {\em Automatica}, 145(110515), 2022.

\bibitem{Baudouin2019}
L.~Baudouin, A.~Seuret, and F.~Gouaisbaut.
\newblock {Stability analysis of a system coupled to a heat equation}.
\newblock {\em Automatica}, pages 195--202, 2019.

\bibitem{Bonnet2000}
C.~Bonnet and J.R. Partington.
\newblock Coprime factorizations and stability of fractional differential
  systems.
\newblock {\em Systems \& Control Letters}, 41(3):167--174, 2000.

\bibitem{Breda2004}
D.~Breda, S.~Maset, and R.~Vermiglio.
\newblock Computing the characteristic roots for delay differential equations.
\newblock {\em IMA Journal of Numerical Analysis}, 24:1--19, 2004.

\bibitem{Breda2014}
D.~Breda, S.~Maset, and R.~Vermiglio.
\newblock {\em Stability of Linear Delay Differential Equations: A Numerical
  Approach with Matlab}.
\newblock Springer, 2014.

\bibitem{Casten1978}
R.G. Casten and C.J. Holland.
\newblock {Instability results for reaction diffusion equations with Neumann
  boundary conditions}.
\newblock {\em Journal of Differential Equations}, 27:266--273, 1978.

\bibitem{Niculescu2010}
J.~Chen, P.~Fu, S.I. Niculescu, and Z.~Guan.
\newblock {An eigenvalue perturbation approach to stability analysis, Part~I:
  eigenvalue series of matrix operators}.
\newblock {\em SIAM Journal on Control and Optimization}, 48(8):5564--5582,
  2010.

\bibitem{Conway1978}
J.B. Conway.
\newblock {\em Functions of One Complex Variable I}.
\newblock Springer, 1978.

\bibitem{Curtain2020}
R.~F. Curtain and H.~J. Zwart.
\newblock {\em Introduction to Infinite-Dimensional Systems: A State Space
  Approach}, volume~71 of {\em Texts in Applied Mathematics}.
\newblock Springer, 2020.

\bibitem{Datko1968}
R.~Datko.
\newblock {An extension of a theorem of Lyapunov to semi-groups of operators}.
\newblock {\em Journal of Mathematical Analysis and Applications}, 24:290--295,
  1968.

\bibitem{Datko1970}
R.~Datko.
\newblock {Extending a theorem of A. M. Lyapunov to Hilbert space}.
\newblock {\em Journal of Mathematical Analysis and Applications}, 32:610--616,
  1970.

\bibitem{Oliveira1994}
L.A.F. De~Oliveira.
\newblock Instability of homogeneous periodic solutions of parabolic-delay
  equations.
\newblock {\em Journal of differential equations}, 109(1):42--76, 1994.

\bibitem{Dlala2022}
M.~Dlala and A.~Benabdallah.
\newblock {Global stabilization of nonlinear finite dimensional system with
  dynamic controller governed by 1-D heat equation with Neumann
  interconnection}.
\newblock {\em Mathematics}, 10(2):227, 2022.

\bibitem{Mondie2017}
A.V. Egorov, C.~Cuvas, and S.~Mondi\'e.
\newblock Necessary and sufficient stability conditions for linear systems with
  pointwise and distributed delays.
\newblock {\em Automatica}, 80(6):118--224, 2017.

\bibitem{Fantuzzi2017}
G.~Fantuzzi, A.~Wynn, P.J. Goulart, and A.~Papachristodoulou.
\newblock Optimization with affine homogeneous quadratic integral inequality
  constraints.
\newblock {\em IEEE Transactions on Automatic Control}, 62(12):6221--6236,
  2017.

\bibitem{Fioravanti2012}
A.R. Fioravanti, C.~Bonnet, H.~{\"O}zbay, and S.I. Niculescu.
\newblock A numerical method for stability windows and unstable root-locus
  calculation for linear fractional time-delay systems.
\newblock {\em Automatica}, 48(11):2824--2830, 2012.

\bibitem{Niculescu2005}
P.~Fu, S.I. Niculescu, and J.~Chen.
\newblock {Stability of linear neutral time-delay systems Exact conditions via
  matrix pencil solutions}.
\newblock In {\em {American Control Conference, ACC}}, volume~6, pages
  4259--4264, Portland, 2005.

\bibitem{Gomez2021}
M.A. Gomez, A.V. Egorov, and S.~Mondie.
\newblock Necessary and sufficient stability condition by finite number of
  mathematical operations for time-delay systems of neutral type.
\newblock {\em IEEE Transactions on Automatic Control}, 66:2802--2808, 2021.

\bibitem{Guo2001}
B.Z. Guo.
\newblock Riesz basis approach to the stabilization of a flexible beam with a
  tip mass.
\newblock {\em SIAM Journal on Control and Optimization}, 39, 2001.

\bibitem{Karafyllis2011}
I.~Karafyllis and Z.P. Jiang.
\newblock {\em Stability and Stabilization of Nonlinear Systems}.
\newblock Communications and Control Engineering. Springer, 2011.

\bibitem{Karafyllis2019}
I.~Karafyllis and M.~Krstic.
\newblock {\em {Input-to-State Stability for PDEs}}.
\newblock Encyclopedia of Systems and Control. Springer London, 2019.

\bibitem{Katz2020}
R.~Katz and E.~Fridman.
\newblock {Constructive method for finite-dimensional observer-based control of
  1-D parabolic PDEs}.
\newblock {\em Automatica}, 122:109285, 2020.

\bibitem{Kharitonov2006}
V.L. Kharitonov.
\newblock Lyapunov matrices for a class of time-delay systems.
\newblock {\em Systems \& Control Letters}, 55:610--617, 2006.

\bibitem{Krstic2009}
M.~Krstic.
\newblock {\em Delay compensation for nonlinear, adaptive, and PDE systems}.
\newblock Springer, 2009.

\bibitem{Suzuki2017}
A.~Marciniak-Czochra, G.~Karch, and K~Suzuki.
\newblock Instability of turing patterns in reaction-diffusion-ode systems.
\newblock {\em Journal of Mathematical Biology}, 74:583–618, 2017.

\bibitem{Mironchenko2019}
A.~Mironchenko and C.~Prieur.
\newblock Input-to-state stability of infinite-dimensional systems: Recent
  results and open questions.
\newblock {\em SIAM Review, Society for Industrial and Applied Mathematics},
  62(3):529--614, 2020.

\bibitem{Mondie2022}
S.~Mondié, A.~Egorov, and M.A. Gomez.
\newblock Lyapunov stability tests for linear time-delay systems.
\newblock {\em Annual Reviews in Control}, 54:68--80, 2022.

\bibitem{Peet2006}
A.~Papachristodoulou and M.~M. Peet.
\newblock On the analysis of systems described by classes of partial
  differential equations.
\newblock In {\em Proceedings of the 45th IEEE Control and Decision Conference
  (CDC)}, page 747–752. IEEE, 2006.

\bibitem{Plawsky2009}
J.L. Plawsky.
\newblock {\em Transport phenomena fundamentals}.
\newblock CRC press, 2009.

\bibitem{Powell1981}
M.J.D. Powell.
\newblock {\em Approximation theory and methods}.
\newblock Cambridge University Press, 1981.

\bibitem{Prieur2012}
C.~Prieur and F.~Mazenc.
\newblock {ISS-Lyapunov functions for time-varying hyperbolic systems of
  balance laws}.
\newblock {\em Mathematics of Control, Signals, and Systems}, 24(1):111–134,
  2012.

\bibitem{Prieur2019}
C.~Prieur and E.~Trelat.
\newblock {Feedback stabilization of a 1-D linear reaction-diffusion equation
  with delay boundary control}.
\newblock {\em IEEE Transactions on Automatic Control}, 64(4):1415--1425, 2019.

\bibitem{Peet2019}
S.~Shivakumar, A.~Das, S.~Weiland, and M.M. Peet.
\newblock A generalized {LMI} formulation for input-output analysis of linear
  systems of {ODEs} coupled with {PDEs}.
\newblock In {\em Proceedings of the 58th IEEE Conference on Decision and
  Control (CDC)}, pages 280--285. IEEE, 2019.

\bibitem{Sipahi2011}
R.~Sipahi, S.I. Niculescu, C.T. Abdallah, W.~Michiels, and K.~Gu.
\newblock Stability and stabilization of systems with time delay: Limitations
  and opportunities.
\newblock {\em IEEE Control Systems}, 31:38--65, 2011.

\bibitem{Stefanov2021}
S.M. Stefanov.
\newblock {\em Separable Optimization: Theory and methods}.
\newblock Springer, 2021.

\bibitem{Tang2011}
S.~Tang and C.~Xie.
\newblock {State and outuput feedback boundary control for a coupled PDE-ODE
  system}.
\newblock {\em Systems \& Control Letters}, 60:540--545, 2011.

\bibitem{Zhao2019}
D.X. Zhao, J.M. Wang, and Y.P. Guo.
\newblock {The direct feedback control and exponential stabilization of a
  coupled heat PDE-ODE system with Dirichlet boundary interconnection}.
\newblock {\em International Journal of Control, Automation and Systems},
  17:38–45, 2019.

\end{thebibliography}

\end{document}